\documentclass[11pt,twoside]{article}
\usepackage[utf8]{inputenc}
\usepackage{amsmath}
\usepackage{amsfonts}
\usepackage{amssymb}
\usepackage{amsthm}
\usepackage[english]{babel}
\usepackage{hyperref}
\usepackage[alphabetic]{amsrefs}
\usepackage{geometry}
\geometry{left=3cm,right=3cm,top=4cm,bottom=3.5cm}
\usepackage{graphicx}
\usepackage{mathrsfs}
\usepackage{tikz-cd}
\usepackage[indent]{parskip}
\setlength{\parskip}{\baselineskip}
\usepackage{paralist}

\pltopsep=-6pt
\plitemsep=4pt
\plparsep=4pt
\hypersetup{
	colorlinks   = true, 
	urlcolor     = blue, 
	linkcolor    = blue,
	citecolor    = blue 
}

\usepackage{fancyhdr}
\pagestyle{fancy}
\fancyhf{}
\fancyhead[CE]{Akashdeep Dey}
\fancyhead[CO]{Existence of closed geodesics on certain non-compact Riemannian manifolds}
\cfoot{\thepage}
\setlength{\headsep}{8pt}

\usepackage{mathtools}
\mathtoolsset{showonlyrefs}

\makeatletter
\def\thm@space@setup{%
	\thm@preskip=\parskip \thm@postskip=0pt
}
\makeatother
\usepackage{etoolbox}
\AtBeginEnvironment{proof}{\vspace*{-0.5\parskip}}


\begin{document}
\title{\textbf{Existence of closed geodesics on certain non-compact Riemannian manifolds}}
\author{Akashdeep Dey \thanks{Department of Mathematics, University of Toronto, Toronto, Ontario M5S 2E4, Canada, Email:  dey.akash01@gmail.com}}
\date{}
\maketitle

\theoremstyle{plain}
\newtheorem{thm}{Theorem}[section]
\newtheorem{lem}[thm]{Lemma}
\newtheorem{pro}[thm]{Proposition}
\newtheorem{clm}[thm]{Claim}
\newtheorem{cor}[thm]{Corollary}
\newtheorem*{thm*}{Theorem}
\newtheorem*{lem*}{Lemma}
\newtheorem*{clm*}{Claim}

\theoremstyle{definition}
\newtheorem{defn}[thm]{Definition}
\newtheorem{ex}[thm]{Example}
\newtheorem{rmk}[thm]{Remark}

\numberwithin{equation}{section}

\newcommand{\mf}{manifold\;}\newcommand{\mfs}{manifolds\;}
\newcommand{\vf}{varifold\;}
\newcommand{\hy}{hypersurface\;}
\newcommand{\Rm}{Riemannian\;}
\newcommand{\cn}{constant\;}
\newcommand{\mt}{metric\;} 
\newcommand{\st}{such that\;}\newcommand{\sot}{so that\;}
\newcommand{\Thm}{Theorem\;}
\newcommand{\Lem}{Lemma\;}
\newcommand{\Pro}{Proposition\;}
\newcommand{\eqn}{equation\;}
\newcommand{\te}{there exist\;}\newcommand{\tes}{there exists\;}\newcommand{\Te}{There exist\;}\newcommand{\Tes}{There exists\;}\newcommand{\fa}{for all\;}
\newcommand{\tf}{Therefore,\;} \newcommand{\hn}{Hence,\;}\newcommand{\Sn}{Since\;}\newcommand{\sn}{since\;}\newcommand{\nx}{\Next,\;}\newcommand{\df}{define\;}
\newcommand{\wrt}{with respect to\;}
\newcommand{\bbr}{\mathbb{R}}\newcommand{\bbq}{\mathbb{Q}}
\newcommand{\bbn}{\mathbb{N}}
\newcommand{\bbz}{\mathbb{Z}}
\newcommand{\mres}{\scalebox{1.8}{$\llcorner$}}
\newcommand{\ra}{\rightarrow}
\newcommand{\fn}{function\;}
\newcommand{\lra}{\longrightarrow}
\newcommand{\sps}{Suppose\;}
\newcommand{\del}{\partial}
\newcommand{\seq}{sequence\;}\newcommand{\w}{with\;}
\newcommand{\cts}{continuous\;} 
\newcommand{\bX}{\mathbf{X}} 
\newcommand{\bA}{\mathbf{A}} 
\newcommand{\bfe}{\mathbf{e}}\newcommand{\bc}{\mathbf{c}}\newcommand{\bj}{\mathbf{j}}
\newcommand{\bL}{\mathbf{L}}\newcommand{\bH}{\mathbf{H}}
\newcommand{\cm}{\mathcal{C}(M)}
\newcommand{\zn}{\mathcal{Z}_n(M, \mathbb{Z}_2)}
\newcommand{\Le}{\sL^E}\newcommand{\msu}{M \setminus U}

\newcommand{\cA}{\mathcal{A}}\newcommand{\cB}{\mathcal{B}}\newcommand{\cC}{\mathcal{C}}\newcommand{\cD}{\mathcal{D}}\newcommand{\cE}{\mathcal{E}}\newcommand{\cF}{\mathcal{F}}\newcommand{\cG}{\mathcal{G}}\newcommand{\cH}{\mathcal{H}}\newcommand{\cI}{\mathcal{I}}\newcommand{\cJ}{\mathcal{J}}\newcommand{\cK}{\mathcal{K}}\newcommand{\cL}{\mathcal{L}}\newcommand{\cM}{\mathcal{M}}\newcommand{\cN}{\mathcal{N}}\newcommand{\cO}{\mathcal{O}}\newcommand{\cP}{\mathcal{P}}\newcommand{\cQ}{\mathcal{Q}}\newcommand{\cR}{\mathcal{R}}\newcommand{\cS}{\mathcal{S}}\newcommand{\cT}{\mathcal{T}}\newcommand{\cU}{\mathcal{U}}\newcommand{\cV}{\mathcal{V}}\newcommand{\cW}{\mathcal{W}}\newcommand{\cX}{\mathcal{X}}\newcommand{\cY}{\mathcal{Y}}\newcommand{\cZ}{\mathcal{Z}}

\newcommand{\sA}{\mathscr{A}}\newcommand{\sB}{\mathscr{B}}\newcommand{\sC}{\mathscr{C}}\newcommand{\sD}{\mathscr{D}}\newcommand{\sE}{\mathscr{E}}\newcommand{\sF}{\mathscr{F}}\newcommand{\sG}{\mathscr{G}}\newcommand{\sH}{\mathscr{H}}\newcommand{\sI}{\mathscr{I}}\newcommand{\sJ}{\mathscr{J}}\newcommand{\sK}{\mathscr{K}}\newcommand{\sL}{\mathscr{L}}\newcommand{\sM}{\mathscr{M}}\newcommand{\sN}{\mathscr{N}}\newcommand{\sO}{\mathscr{O}}\newcommand{\sP}{\mathscr{P}}\newcommand{\sQ}{\mathscr{Q}}\newcommand{\sR}{\mathscr{R}}\newcommand{\sS}{\mathscr{S}}\newcommand{\sT}{\mathscr{T}}\newcommand{\sU}{\mathscr{U}}\newcommand{\sV}{\mathscr{V}}\newcommand{\sW}{\mathscr{W}}\newcommand{\sX}{\mathscr{X}}\newcommand{\sY}{\mathscr{Y}}\newcommand{\sZ}{\mathcal{Z}}

\newcommand{\al}{\alpha}\newcommand{\be}{\beta}\newcommand{\ga}{\gamma}\newcommand{\de}{\delta}\newcommand{\ve}{\varepsilon}\newcommand{\et}{\eta}\newcommand{\ph}{\phi}\newcommand{\vp}{\varphi}\newcommand{\ps}{\psi}\newcommand{\ka}{\kappa}\newcommand{\la}{\lambda}\newcommand{\om}{\omega}\newcommand{\rh}{\rho}\newcommand{\si}{\sigma}\newcommand{\tht}{\theta}\newcommand{\ta}{\tau}\newcommand{\ch}{\chi}\newcommand{\ze}{\zeta}\newcommand{\Ga}{\Gamma}\newcommand{\De}{\Delta}\newcommand{\Ph}{\Phi}\newcommand{\Ps}{\Psi}\newcommand{\La}{\Lambda}\newcommand{\Om}{\Omega}\newcommand{\Si}{\Sigma}\newcommand{\Tht}{\Theta}\newcommand{\na}{\nabla}

\newcommand{\nm}[1]{\left\|#1\right\|}\newcommand{\md}[1]{\left|#1\right|}\newcommand{\Md}[1]{\Big|#1\Big|}\newcommand{\db}[1]{[\![#1]\!]}
\newcommand{\vol}{\operatorname{Vol}}\newcommand{\ov}[1]{\overline{#1}}

\vspace{-2ex}
\begin{abstract}
	\vspace{-1.5ex}
	\noindent
Let $M$ be a complete Riemannian manifold. Suppose $M$ contains a bounded, concave, connected open set $U$ with $C^0$ boundary and $M\setminus U$ is connected. We assume that either the relative homotopy set $\pi_1(M,M\setminus U)=0$ or the union of all the conjugate subgroups of the image of the homomorphism $\pi_1(M\setminus U)\rightarrow \pi_1(M)$ (induced by the inclusion $M\setminus U\hookrightarrow M$) is a proper subset of $\pi_1(M)$. (The first condition is equivalent to $\pi_1(M\setminus U)\rightarrow \pi_1(M)$ is surjective; the second condition is satisfied if the relative homology group $H_1(M,M\setminus U)\neq 0$.) Then there exists a non-trivial closed geodesic on $M$. This partially proves a conjecture of Chambers, Liokumovich, Nabutovsky and Rotman \cite{CLNR}*{Conjecture 1.1}.
\end{abstract}

\section{Introduction}

In \cite{Poi}, Poincar\'{e} asked the question whether every closed \Rm surface $\Si$ admits a closed geodesic. If the fundamental group of $\Si$ (more generally, any closed \Rm \mf $N$) is non-zero, then one can construct a non-trivial closed geodesic on \(\Si\) (resp. on $N$) by minimizing the length functional in a non-trivial (free) homotopy class of maps $S^1 \ra \Si$ (resp. $S^1 \ra N$). In \cite{Bir}, Birkhoff developed a min-max method and proved the existence of a closed geodesic on every \Rm two-sphere. This theorem was generalized in higher dimensions by Fet and Lyusternik \cite{LF}; they proved that every closed \Rm \mf admits a closed geodesic.

Minimal submanifolds are the higher dimensional generalizations of the geodesics. In \cite{alm}, Almgren developed a min-max theory to prove the existence of a closed (singular) minimal submanifold of dimension $l$ in any closed \Rm \mf \(N^n\) and for every \(1\leq l\leq n-1\). The regularity theory in the co-dimension 1 case (i.e. when $l=n-1$) was further developed by Pitts \cite{pit} and Schoen–Simon \cite{ss}. When \(l=1\), the Almgren–Pitts min–max theory produces stationary geodesic nets, which may not be smooth. (For a precise statement, see \cite{LS}*{Proposition 3.2}.) Using the phase-transition regularization of the length functional, Chodosh and Mantoulidis \cite{CM23} proved that the geodesic nets, produced by the Almgren-Pitts min-max theory on surfaces, are closed geodesics. In \cite{LS}, Liokumovich and Staffa proved that for a generic \Rm \mt on a closed manifold, the union of all stationary geodesic nets is dense. In \cite{LiS}, Li and Staffa proved the generic equidistribution of closed geodesics on closed surfaces and stationary geodesic nets on closed manifolds.

In general, the above mentioned Fet-Lyusternik theorem does not hold on non-compact manifolds. For example, \(\bbr^n\), equipped with a \Rm metric of non-positive curvature, does not contain a closed geodesic. By the works of Thorbergsson \cite{Tho} and Bangert \cite{Ban}, every complete \Rm surface with locally convex ends admits a closed geodesic. Informally speaking, a complete \Rm \mf $M$ is said to have locally convex ends if $M$ contains a bounded, concave (see Definition \ref{d.conv}), open set. In \cite{CLNR}, Chambers, Liokumovich, Nabutovsky and Rotman proved that every complete \Rm \mf with locally convex ends admits a stationary geodesic net, which has at most one singular vertex. They also conjectured that such a manifold contains a closed geodesic. In this paper, we prove this conjecture (Theorems \ref{thm}, \ref{thm.main}, Remark \ref{r.1.4}) under some additional topological assumptions.

\begin{defn}\label{d.conv}
Let \(\sM\) be a complete \Rm manifold. \(\sC\subset \sM\) is called \textit{\(\de\)-convex} (where $\de>0$) if the following condition is satisfied. If \(p,q\in \sC\) \sot \(d(p,q)\leq \de\), then a minimizing geodesic from $p$ to $q$ is contained in $\sC$. \(\sU \subset \sM\) is called \textit{concave} if \(\sM \setminus \sU\) is \(\de\)-convex for some $\de>0$.
\end{defn}

For a topological space \(X\) and $A\subset X$, the inclusion $A \hookrightarrow X$ induces a map $\pi_0(\La A)\ra \pi_0(\La X)$ (here $\pi_0(\cdot)$ denotes the set of path-components and $\La \cdot$ denotes the free loop space) and a homomorphism \(\pi_i(\La A, c)\ra \pi_i(\La X, c)\) (where $\pi_i(\cdot)$ denotes the $i$-th homotopy group) \fa \(i\geq 1\) and basepoints \(c\in \La A\). We say that $(X,A)$ satisfies condition $(*)$ if $X$, $A$, $X\setminus A$ are connected, $\pi_1(X,A)\neq 0$, the map $\pi_0(\La A)\ra \pi_0(\La X)$ is bijective and the homomorphism \(\pi_i(\La A, c)\ra \pi_i(\La X, c)\) is an isomorphism \fa \(i\geq 1\) and  \(c\in \La A\) which are non-contractible in $A$.

\begin{thm}\label{thm}
Let $M$ be a complete Riemannian manifold. Suppose $M$ contains a bounded, concave, connected, open set $U$ with $C^0$ boundary and $M\setminus U$ is connected. Then either \tes a non-trivial closed geodesic on $M$ or $(M,M\setminus U)$ satisfies condition $(*)$.
\end{thm}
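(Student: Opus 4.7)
The plan is to prove the contrapositive: assuming $M$ admits no non-trivial closed geodesic, derive each of the three clauses of condition $(*)$. The engine throughout is min-max of the length functional on the free loop space $\La M$, combined with a Birkhoff-type curve-shortening process (BCSP). The $\de$-convexity of $M\setminus U$ implies that BCSP, performed with mesh finer than $\de$ on a loop in $M\setminus U$, preserves $M\setminus U$; hence BCSP descends to a length-non-increasing continuous deformation on families in $\La M$ that leaves $\La(M\setminus U)$ setwise invariant. The boundedness of $U$ provides the compactness needed to run min-max: a length-bounded loop meeting $U$ is confined to a fixed compact subset of $M$, so Palais--Smale-type sequences cannot escape to infinity.

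For bijectivity of $\pi_0(\La(M\setminus U))\to \pi_0(\La M)$ and the isomorphisms $\pi_i(\La(M\setminus U),c)\to \pi_i(\La M,c)$ at non-contractible $c\in \La(M\setminus U)$, the argument is a parallel min-max. A failure of surjectivity (resp.\ injectivity) at level $i$ produces a continuous $i$-parameter (resp.\ $(i{+}1)$-parameter) family in $\La M$ whose class is not in the image (resp.\ is mapped to zero) of the inclusion $\La(M\setminus U)\hookrightarrow \La M$. Consider the width
\[ W \;=\; \inf_{H}\, \sup_{s}\, \operatorname{length}\bigl(H(\cdot,s)\bigr), \]
where $H$ ranges over homotopies in $\La M$ from the given family to one in $\La(M\setminus U)$. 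Either a minimizing sequence for $W$, after BCSP regularization at each slice, converges to a bona fide deformation into $\La(M\setminus U)$ — contradicting the assumed failure of $(*)$ — or it concentrates at a non-trivial critical point of length, producing a closed geodesic of positive length, which contradicts the standing hypothesis. Standard min-max regularity together with the boundedness-enforced compactness rules out escape to infinity and degenerate limits.

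The main obstacle is the first clause, $\pi_1(M,M\setminus U)\neq 0$, where one must produce a genuinely new sweepout rather than deform an already-given one. The plan is to exploit the interior geometry of the bounded set $U$: choose a deep point $p^\ast\in U$ maximizing $d(\cdot,\partial U)$, and, under the hypothesis $\pi_1(M,M\setminus U)=0$, construct a $1$-parameter family $\{\ga_t\}_{t\in[0,1]}$ in $\La M$ with endpoints two loops (or constants) in $\La(M\setminus U)$, whose width $W$ admits a geometric lower bound of order $d(p^\ast,\partial U)>0$ coming from the fact that, to realize the class in $\pi_1(M,M\setminus U)$ trivially, some slice $\ga_{t^\ast}$ must encircle $p^\ast$ or link $p^\ast$ to $\partial U$. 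Min-max on this sweepout then produces a non-trivial closed geodesic, contradiction. Executing this step — in particular establishing the positive lower bound on $W$ from the vanishing of $\pi_1(M,M\setminus U)$, and ruling out that the min-max critical value is attained only by a stationary geodesic net or by short-loop collapse — is the technical heart of the argument and requires careful use of the $C^0$ boundary structure of $U$ together with the $\de$-convexity of $M\setminus U$.
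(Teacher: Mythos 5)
Your overall architecture (reduce to the four possible failures of condition $(*)$ and produce a geodesic in each case via Birkhoff curve shortening, using $\de$-convexity to keep $\La(M\setminus U)$ invariant and boundedness of $U$ to prevent escape to infinity) matches the paper. But there is a genuine gap in your treatment of the first clause, which you correctly identify as the heart of the matter and then plan incorrectly. When $\pi_1(M,M\setminus U)=0$ there is in general \emph{no} $1$-parameter family of loops with endpoints in $\La(M\setminus U)$ carrying a positive width: if in addition $\pi_1(\La M,\La(M\setminus U))=0$, every such family can be compressed into $\La(M\setminus U)$ with no length obstruction, and no choice of ``deep point'' $p^\ast$ rescues this --- the picture of a slice ``encircling $p^\ast$'' is a codimension-two heuristic that fails as soon as $\dim M\geq 3$. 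The correct input is topological, not metric: one shows (Lemma \ref{l.pi.k}, via excision and Lefschetz duality applied to $(\ov U,\del U)$) that $\pi_{i}(M,M\setminus U)\neq 0$ for \emph{some} $i\geq 1$, takes the least $k$ with $\pi_{k+1}(M,M\setminus U)\neq 0$, and transfers this to $\pi_k(\La M,\La(M\setminus U),c_p)\neq 0$ via the splitting $\pi_m(\La X,\La A)\cong \pi_m(\Om_pX,\Om_pA)\rtimes\pi_m(X,A)$ (Proposition \ref{p.rel.htp}, a five-lemma argument for the relative loop-space fibration). The min-max is then run on a $k$-parameter family, and the contradiction at the end is not a quantitative lower bound on width but a compression argument: after Birkhoff shortening every slice meeting $U$ is short and hence contractible inside a collar, and the vanishing of $\pi_i(M,M\setminus U)$ for $i\leq k$ (Hatcher's compression lemma) pushes the whole family into $\La(M\setminus U)$, contradicting non-triviality of the class. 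None of this appears in your plan, and without it the case $\pi_1(M,M\setminus U)=0$ is not handled.

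Two smaller but real problems. First, your width functional $W=\inf_H\sup_s\ell(H(\cdot,s))$ ranges over homotopies from the given family to one in $\La(M\setminus U)$; when the relative class is non-trivial this set of competitors is empty, so $W$ is not defined --- the paper instead applies the shortening process directly to the family and derives a contradiction from the topology, with no intermediate variational quantity. Second, ``boundedness-enforced compactness'' is not automatic: a loop that exits $U$ entirely is no longer constrained and the shortening map is not even defined on all of $\sL M$ for non-compact $M$. The paper handles this by freezing the flow on loops disjoint from $U$ (the cutoff functions $\ch_j$ in Lemma \ref{l.F.G}) and by isometrically embedding a bounded neighbourhood $W$ into a closed manifold $N$ so that the standard Birkhoff theory applies; some such device is needed and your proposal does not supply one.
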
 

As a consequence of this theorem, we obtain the following.

\begin{thm}\label{thm.main}
Let $M$ be a complete Riemannian manifold. Suppose $M$ contains a bounded, concave, connected, open set $U$ with $C^0$ boundary and $M\setminus U$ is connected. Then there exists a non-trivial closed geodesic on $M$ if one of the following conditions is satisfied.
\begin{itemize}
	\item[(i)] The relative homotopy set \(\pi_1(M,M\setminus U)=0\).
	\item[(ii)] The relative homology group \(H_1(M,M\setminus U)\neq 0\).
	\item[(iii)] \(\del U\) is not connected.
	\item[(iv)] \(\del U\) is connected and simply connected.
	\item[(v)] $\pi_1(M)$ is a finite group or an Abelian group.
	\item[(vi)] $\pi_1(U)$ is a finite group or an Abelian group.
\end{itemize}
\end{thm}

\begin{rmk}\label{r.1.4}
Let $M$, $U$ be as in the assumption of Theorem \ref{thm}. Then, \tes a closed geodesic on $M$ if either \(\pi_1(M,\msu)=0\) or the map \(\pi_0(\La (\msu))\ra \pi_0(\La M)\) is not surjective. \Sn $\msu$ is connected, the homotopy exact sequence for the pair $(M,\msu)$ implies that \(\pi_1(M,\msu)=0\) if and only if $\pi_1(M\setminus U)\rightarrow \pi_1(M)$ is surjective. By the Seifert-van Kampen theorem, if $\del U$ is connected and $\pi_1(\del U)\ra \pi_1(\ov{U})$ is surjective, then $\pi_1(M\setminus U)\rightarrow \pi_1(M)$ is also surjective. In the proof of Theorem \ref{thm.main}, we show that if $\del U$ is not connected, then \(\pi_0(\La (\msu))\ra \pi_0(\La M)\) is not surjective. \tf using Lemma \ref{l.*}, we conclude that if $M$ does not admit a closed geodesic, the following condition must be satisfied. The image of $\pi_1(M\setminus U)\rightarrow \pi_1(M)$ is a proper subgroup of $\pi_1(M)$ and the union of all its conjugate subgroups is $\pi_1(M)$. This condition implies the following condition on $\ov{U}$ (which is a compact manifold with non-empty boundary). $\del U$ is connected, the image of $\pi_1(\del U)\rightarrow \pi_1(\ov{U})$ is a proper subgroup of $\pi_1(\ov{U})$ and the union of all its conjugate subgroups is $\pi_1(\ov{U})$.
\end{rmk}

Let us briefly discuss the proof of Theorem \ref{thm} and elaborate on the condition $(*)$. When $\pi_1(M,M\setminus U)=0$, the proof for the existence of a closed geodesic on $M$ is similar to the proof of the Fet-Lyusternik theorem \cite{Moore}*{Proof of Theorem 2.3.2}. If \(\pi_1(M,M\setminus U)=0\), we choose \(k\geq 1\) \st \(\pi_i(M,M\setminus U)=0\) for \(1\leq i \leq k\) and \(\pi_{k+1}(M,M\setminus U)\neq 0\). This implies \(\pi_k(\La M,\La (M\setminus U),c_p)\neq 0\) (here $c_p$ is the constant loop defined by $p$; see Section \ref{s.notn}). We choose \(F:(I^k,\del I^k)\ra (\La M,\La (M\setminus U)) \) \st \([F]\neq 0\) in \(\pi_k(\La M,\La (M\setminus U),c_p)\). We prove the existence of a closed geodesic on $M$ by applying the Birkhoff curve shortening flow to the map $F$. The Birkhoff curve shortening flow is a discrete gradient flow for the length functional and (for the appropriately chosen parameters) the flow maps $\La (M\setminus U)$ to itself (since $U$ is concave).  

If the map $\pi_0(\La (M\setminus U))\ra \pi_0(\La M)$ is not injective or if \(\pi_m(\La M, \La (M\setminus U), c^*)\neq 0\) for some \(m\geq 1\) and some \(c^* \in \La (M\setminus U)\) which is non-contractible in \(M\setminus U\), one can prove the existence of a closed geodesic on $M$ using a min-max argument (similar to the above case when \(\pi_1(M,M\setminus U)=0\)). 

On the other hand, when \(\pi_1(M,M\setminus U)\neq 0\), we aim to prove the existence of a closed geodesic on $M$ using a minimization method. However, on a non-compact manifold, it may not be possible to obtain a closed geodesic by minimization since under the Birkhoff curve shortening flow, a loop might escape to infinity. So, for our proof method to work, we need to additionally assume that \(M\setminus U\) is connected (so that a non-trivial element in $\pi_1(M,M\setminus U)$ can be represented by a loop in $M$) and the map $\pi_0(\La (M\setminus U))\ra \pi_0(\La M)$ is not surjective. The map $\pi_0(\La (M\setminus U))\ra \pi_0(\La M)$ fails to be surjective if and only if there exists a loop $\ga$ in $M$ which is not freely homotopic to a loop in $M\setminus U$. As a consequence, under the Birkhoff curve shortening flow, $\ga$ can not escape to infinity and hence must converge to a non-trivial closed geodesic. We remark that for the above mentioned min-max argument, we do not need to assume that $\msu$ is connected; however, if \(M\setminus U\) is not connected, $\pi_1(M,M\setminus U)$ is necessarily non-zero (cf. proof of Theorem \ref{thm} when either (C2) or (C3) holds in Section \ref{s.proof}).

The condition $(*)$ for $(M,M\setminus U)$ has formal similarities with the topology of the free loop space $\La N$, where $N$ is a closed \Rm \mf \w negative sectional curvature. $(M,M\setminus U)$ satisfies condition $(*)$ if and only if $\pi_1(M, M\setminus U)\neq 0$, for each $\tilde{\Pi}\in \pi_0(\La M)$ \tes a unique $\Pi\in \pi_0(\La (M\setminus U))$ \st \(\Pi\subset \tilde{\Pi}\) and for each non-standard path-component $\tilde{\Pi}\in \pi_0(\La M)$ (i.e. $\tilde{\Pi}$ does not contain the constant loops), $\pi_i(\tilde{\Pi}, \Pi)=0$ \fa $i\geq 1$. (We note that, if $M\setminus U$ is connected, then $\pi_1(M, M\setminus U)\neq 0$ implies $\pi_1(M)\neq 0$, i.e. $\La M$ has more than one path-components.) On the other hand, if $N$ is a closed \Rm \mf \w negative sectional curvature, then $\pi_1(N)\neq 0$; the standard path-component of $\La N$ is homotopy equivalent to $N$ and does not contain any non-trivial closed geodesic; each non-standard path-component $\Pi$ of $\La N$ is homotopy equivalent to $S^1$ and contains a unique (up to reparametrization) closed geodesic, which is the minimizer of the length functional in $\Pi$.

\noindent\textbf{Acknowledgements.} I am very grateful to  Professors Yevgeny Liokumovich, Alexander Nabutovsky and Regina Rotman for many helpful discussions and suggestions, especially about Theorem \ref{thm.main} and Remark \ref{r.1.4}. I would like to thank Prof. Liokumovich for suggesting this problem to me. The author is partially supported by NSERC Discovery grant.

\section{Notations and Preliminaries}
\subsection{Notations}\label{s.notn}
Here we summarize the notations which will be frequently used later.

\begin{itemize}
	\item $[m]$ : The set \(\{1,2,\dots,m\}\). 	
	\item $I$ : The interval $[0,1]$.
	\item \(\text{conv}_p(N)\) : The convexity radius of $N$ at $p$.
	\item \(\text{conv}(N)\) : The convexity radius of $N$.
	\item \(\text{inj}(N)\) : The injectivity radius of \(N\).
	\item \(B(p,r)\) : The geodesic ball centered at $p$ \w radius $r$.
	\item \(\ov{A}\) : The closure of \(A\) (in a topological space).
	\item We will identify \(S^1\) with the quotient of the unit interval \([0,1]\). In particular, a \cts map \(\ga:S^1 \ra X\) is same as a \cts map \(\ga: [0,1]\ra X\) \w \(\ga(0)=\ga(1)\).
	\item \(\La X\) : The space of \cts maps from \(S^1\) to \(X\) (where \(X\) is a topological space), endowed with the compact-open topology.
	\item \(\Om_pX = \{\ga\in \La X: \ga(0)=\ga(1)=p\}.\)
	\item \(c_p\) : For \(p\in X\) (where \(X\) is a topological space), $c_p\in \La X$ is the constant curve defined by $p$, i.e. \(c_p(s)=p\) \fa $s\in S^1$.
	\item \(\md{\ga}\) : For \(\ga\in \La X\) (where \(X\) is a topological spaces), \(\md{\ga}\) denotes the image of \(\ga\).
	\item \(\sL N\) : The Sobolev space of \(W^{1,2}\) maps from \(S^1\) to \(N\) (where \(N\) is a \Rm manifold).
	\item \(\ell(\ga)\), \(\sE(\ga)\) : For \(\ga \in \sL N\), \(\ell(\ga)\) and \(\sE(\ga)\) denote the length and energy of \(\ga\), i.e.
	\[\ell(\ga)=\int_{0}^{1}\md{\ga'},\quad \sE(\ga)=\int_{0}^{1}\md{\ga'}^2.\]
	
\end{itemize}

\subsection{The Birkhoff curve shortening map}
In this subsection, we briefly discuss the properties of the Birkhoff curve shortening map following Colding-Minicozzi \cite{CM_article,CM_book}. Let $N$ be a closed \Rm manifold. We fix \(E, R>0\) and \(L\in \bbn\) so that 
\begin{equation}\label{e.R}
0<R<\min\left\{\frac{1}{2}\text{conv}(N), \frac{1}{4}\text{inj}(N)\right\}\quad\text{and}\quad L\geq \max\left\{\frac{E}{R^2},\sqrt{E}\right\}.
\end{equation}
For fixed $(E,R,L)$ satisfying \eqref{e.R}, let \(\La\) denote the set of all maps \(c: S^1 \ra N\) \st the following conditions are satisfied. \Te \(L\) points \(s_0,s_1,s_2,\dots,s_L=s_0\in S^1\) \st for all \(i\in [L]\), $d(c(s_{i-1}),c(s_i))\leq R$; \(c\) restricted to \([s_{i-1},s_i]\) is the unique minimizing constant speed geodesic from \(c(s_{i-1})\) to \(c(s_{i})\) and \(\ell(c)\leq L\). Then, \(\La \subset \sL N\) is compact (\wrt the subspace topology). 

Let us use the notation \(\sL^E=\{c\in\sL N: \sE(c)\leq E\}\). Let \(\ga\in\sL^E\) and \(0\leq x<y\leq 1\) \w \((y-x)\leq 1/L\). Then, using \eqref{e.R},
\begin{equation}\label{e.sobolev}
d(\ga(x),\ga(y))\leq \int_{x}^{y}\md{\ga'}\leq \left(\int_{x}^{y} 1\right)^{1/2}\left(\int_{x}^{y}\md{\ga '}^2\right)^{1/2}\leq \sqrt{\frac{E}{L}}\leq R.
\end{equation}

\begin{defn}\label{d.Ps}

The maps \(\Ps_e,\Ps_o,\tilde{\Ps}:\sL^E\ra\sL^E\) are defined as follows (cf. \cite{CM_book}*{p. 166, Steps 1-3}). For \(i\in\{0,1,2,\dots,2L\}\), we set \(x_i=\frac{i}{2L}\).
\begin{itemize}
	\item Denoting \(\ga_e=\Ps_e(\ga)\), \(\ga_e\) is defined by the condition that for all \(i\in[L]\), \(\ga_e(x_{2i})=\ga(x_{2i})\) and \(\ga_e\) restricted to the interval \([x_{2i-2},x_{2i}]\) is a minimizing \cn speed geodesic. (By \eqref{e.sobolev} and \eqref{e.R}, \tes a unique minimizing geodesic from $\ga(x_{2i-2})$ to \(\ga(x_{2i})\).)
	\item Denoting \(\ga_o=\Ps_o(\ga)\), \(\ga_o\) is defined by the condition that for all \(i\in[L]\), \(\ga_o(x_{2i-1})=\ga(x_{2i-1})\) and \(\ga_o\) restricted to the interval \([x_{2i-1},x_{2i+1}]\) (here $x_{2L+1}=x_1$) is a minimizing \cn speed geodesic.
	\item Denoting \(\tilde{\ga}=\tilde{\Ps}(\ga)\), \(\tilde{\ga}\) is defined by the condition that \(\tilde{\ga}\) is a reparametrization of \(\ga\) \sot \(\tilde{\ga}(0)=\ga(0)\) and \(\tilde{\ga}\) has constant speed. 
\end{itemize}
The Birkhoff curve shortening map \(\Ps:\sL^E \ra \La\) is defined by $\Ps(\ga)=\tilde{\Ps}(\Ps_o(\Ps_e(\ga)))$.
	
\end{defn}

\begin{pro}\label{p.Ph}
	\Tes a \cts map \(\Ph_e:\Le\times[0,1]\ra \sL N\) \st \(\Ph_e(\ga,0)=\ga\), \(\Ph_e(\ga,1/2)=\Ps_e(\ga)\) and \(\Ph_e(\ga,1)=\tilde{\Ps}(\Ps_e(\ga)).\) Similarly, \tes  a \cts map \(\Ph_o:\Le\times[0,1]\ra \sL N\) \st \(\Ph_o(\ga,0)=\ga\), \(\Ph_o(\ga,1/2)=\Ps_o(\ga)\) and \(\Ph_o(\ga,1)=\tilde{\Ps}(\Ps_o(\ga))\). \tf \tes a \cts map \(\Ph:\Le\times[0,1]\ra \sL N\) \st \(\Ph(\ga,0)=\ga\) and \(\Ph(\ga,1)=\Ps(\ga)\).
\end{pro}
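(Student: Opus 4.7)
The plan is to construct $\Ph_e$ as a concatenation of two \cts homotopies on $\Le$: on $[0,1/2]$, a \emph{geodesic straightening} that deforms $\ga$ to $\Ps_e(\ga)$, and on $[1/2,1]$, a \emph{reparametrization homotopy} that rescales $\Ps_e(\ga)$ to constant speed, yielding $\tilde{\Ps}(\Ps_e(\ga))$. The construction of $\Ph_o$ will be identical after replacing the even grid $\{x_{2i}\}$ by the odd grid $\{x_{2i-1}\}$. For the combined map $\Ph$, I would set
\[
\Ph(\ga, t) = \begin{cases} \Ph_e(\ga, 2t), & t \in [0, 1/4], \\ \Ph_o(\Ps_e(\ga), (4t - 1)/3), & t \in [1/4, 1], \end{cases}
\]
which terminates at $\tilde{\Ps}(\Ps_o(\Ps_e(\ga))) = \Ps(\ga)$. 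The requirement $\Ps_e(\ga) \in \Le$ needed to apply $\Ph_o$ follows from $\sE(\Ps_e(\ga)) \leq \sE(\ga) \leq E$, a consequence of the minimizing property of the geodesic arcs and Cauchy--Schwarz.

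For the geodesic straightening, I would fix $\ga \in \Le$ and, for each $i \in [L]$, let $g_i: [x_{2i-2}, x_{2i}] \ra N$ denote the unique constant-speed minimizing geodesic from $\ga(x_{2i-2})$ to $\ga(x_{2i})$, whose existence and uniqueness is guaranteed by \eqref{e.sobolev} and \eqref{e.R}. From \eqref{e.sobolev}, the image $\ga([x_{2i-2}, x_{2i}])$ lies in $B(\ga(x_{2i-2}), R)$, and by convexity $g_i$ does too; hence $d(\ga(s), g_i(s)) \leq 2R < \text{conv}(N)$ for every $s$. I would then define
\[
\Ph_e(\ga, t)(s) = \exp_{\ga(s)}\bigl(2t \, \exp_{\ga(s)}^{-1} g_i(s)\bigr), \quad s \in [x_{2i-2}, x_{2i}], \; t \in [0, 1/2],
\]
which travels along the unique minimizing geodesic from $\ga(s)$ at $t = 0$ to $g_i(s) = \Ps_e(\ga)(s)$ at $t = 1/2$. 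For the reparametrization stage, with $\et = \Ps_e(\ga)$ and $\ell(\et) > 0$, let $\ph: [0,1] \ra [0,1]$ be the inverse of the normalized arc-length function of $\et$, so $\tilde{\Ps}(\et) = \et \circ \ph$; for $u \in [0,1]$, setting $\ph_u(s) = (1-u) s + u \ph(s)$ (still monotone) and $\Ph_e(\ga, 1/2 + u/2)(s) = \et(\ph_u(s))$ produces the required interpolation. The degenerate case $\ell(\et) = 0$ is handled by declaring $\Ph_e(\ga, \cdot) \equiv \et$ on $[1/2, 1]$.

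The principal technical obstacle will be verifying that all these maps are \cts in the $W^{1,2}$ topology on $\sL N$, rather than merely in the $C^0$ topology. Via the Sobolev embedding $\sL N \hookrightarrow C^0(S^1, N)$, each evaluation $\ga \mapsto \ga(x_j)$ is \cts, so the endpoint data varies continuously and the piecewise geodesics $g_i$ depend on $\ga$ smoothly; this gives continuity of $\Ps_e, \Ps_o: \Le \ra \Le$. For the geodesic straightening, the $s$-derivative of $\Ph_e(\ga, t)(s)$ is a smooth expression in $(\ga(s), g_i(s), g_i'(s), t)$ applied linearly to $\ga'(s)$ and $g_i'(s)$; if $\ga_n \ra \ga$ in $\Le$, the smooth coefficients converge uniformly while $\ga_n' \ra \ga'$ in $L^2$, which yields the $L^2$ convergence of derivatives needed for $W^{1,2}$ continuity. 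For the reparametrization stage, continuity on $\{\ell(\et) > 0\}$ is routine from the integral formula $\sigma(s) = \ell(\et)^{-1} \int_0^s |\et'|$, and continuity at the degenerate locus follows from $\tilde{\Ps}(\et_n) \ra \et$ in $\sL N$ whenever $\ell(\et_n) \ra 0$ (since then $|\tilde{\Ps}(\et_n)'| = \ell(\et_n) \ra 0$ uniformly and $\tilde{\Ps}(\et_n) \ra \et$ in $C^0$). Finally, the concatenations defining $\Ph_e$, $\Ph_o$, and $\Ph$ are \cts at their joining parameters because the piecewise formulas agree there.
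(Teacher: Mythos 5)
Your overall construction coincides with the paper's (which follows Colding--Minicozzi): on $[0,1/2]$ deform $\ga$ to the broken geodesic via $t\mapsto\bH(\ga(x),\Ps_e(\ga)(x))(2t)$, on $[1/2,1]$ deform to the constant-speed reparametrization, and concatenate the even and odd stages exactly as you do for $\Ph$. The straightening stage (with the bound $d(\ga(s),g_i(s))\leq 2R<\mathrm{conv}(N)$), the energy estimate $\sE(\Ps_e(\ga))\leq\sE(\ga)$ needed to feed $\Ps_e(\ga)$ into $\Ph_o$, and the gluing at the joining parameters are all correct and match the paper.

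The one step that fails as written is the reparametrization homotopy. You define $\ph$ as the inverse of the normalized arc-length function $\si$ of $\et=\Ps_e(\ga)$ and set $\Ph_e(\ga,1/2+u/2)=\et\circ\ph_u$ with $\ph_u=(1-u)\,\mathrm{id}+u\ph$. But $\si$ need not be strictly increasing: nothing prevents $\ga(x_{2i-2})=\ga(x_{2i})$ for some $i$ even when $\ell(\et)>0$, in which case the $i$-th segment of $\et$ is constant, $\si$ is constant on $[x_{2i-2},x_{2i}]$, and no continuous inverse exists --- any choice of $\ph$ jumps at $y_0=\si(x_{2i-2})$ from $x_{2i-2}$ to $x_{2i}$. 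For $0<u<1$ the map $\ph_u$ then jumps at $y_0$ across the interval from $(1-u)y_0+ux_{2i-2}$ to $(1-u)y_0+ux_{2i}$, which in general is not contained in the constancy interval of $\et$; hence $\et\circ\ph_u$ is genuinely discontinuous and leaves $\sL N$. (The same degeneration, a segment length tending to $0$, also breaks your continuity argument for $\ga\mapsto\ph$.) The repair is to interpolate the \emph{forward} reparametrization, as the paper does: writing $\tilde\ga_e=\tilde\Ps(\Ps_e(\ga))$ and $\Ps_e(\ga)=\tilde\ga_e\circ P_\ga$, where $P_\ga$ is the continuous monotone (possibly non-strictly increasing) normalized arc-length function, one sets $\Ph_e(\ga,s)(x)=\tilde\ga_e\left((2-2s)P_\ga(x)+(2s-1)x\right)$ for $s\in[1/2,1]$. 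Since the argument is a continuous monotone function of $x$ composed with the continuous curve $\tilde\ga_e$, no discontinuity can occur, and $P_\ga$ depends continuously on $\ga$.
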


\begin{proof}
	The proof of this proposition follows from the proofs of Lemma 5.3 and Lemma 5.5 in \cite{CM_book}*{Chapter 5}. Let us briefly present the argument here. Let
	\[\Ga=\{(p,q)\in N\times N: d(p,q)<\text{conv}(N)\}.\]
	We define \(\bH:\Ga \ra C^1([0,1],N)\) as follows (\cite{CM_book}*{p. 170, item (C2)}). \(\bH(p,q):[0,1]\ra N\) is the minimizing \cn speed geodesic from $p=\bH(p,q)(0)$ to \(q=\bH(p,q)(1)\), i.e. 
\begin{equation}\label{e.bH}
\bH(p,q)(t)= \exp_p(t\exp_p^{-1}(q)).
\end{equation}	
 Let us use the notation \(\tilde{\ga}_{e}=\tilde{\Ps}(\Ps_e(\ga)).\) Then, \(\ga_e=\tilde{\ga}_e\circ P_{\ga}\) where \(P_{\ga}:[0,1]\ra [0,1]\) is a monotone reparametrization of \([0,1]\) that fixes \(0\) and \(1\). For $(\ga , s)\in \Le\times [0,1]$, \(\Ph_e(\ga,s):S^1\ra N\) is defined by (\cite{CM_book}*{p. 174, eq. (5.24) and (5.25)})
\begin{equation*}
\Ph_e(\ga, s)(x)=
\begin{cases}
\bH(\ga(x),\ga_e(x))(2s) &\text{ if } s\in [0,1/2];\\
\tilde{\ga}_e\left((2-2s)P_{\ga}(x)+(2s-1)x\right) &\text{ if } s\in [1/2,1].
\end{cases}
\end{equation*}
One can define \(\Ph_o\) in a similar way. Finally, \(\Ph:\Le\times[0,1]\ra \sL N\) is defined by 
\begin{equation*}
\Ph(\ga, s)=
\begin{cases}
\Ph_e(\ga , s) &\text{ if } s\in [0,1/2];\\
\Ph_o(\ga_e, 2s-1) &\text{ if } s\in [1/2,1].
\end{cases}
\end{equation*}
\end{proof}

\begin{rmk}\label{r.spt.Ps.Ph}
From Definition \ref{d.Ps} and Proposition \ref{p.Ph}, it follows that
\[\md{\Ps(\ga)},\md{\Ph(\ga,s)} \subset \cN_{4R}(\md{\ga})\; \forall\;\ga\in\Le \text{ and }s\in [0,1].\]
Moreover, if \(C\subset N\) is $2R$-convex (Definition \ref{d.conv}) and \(\md{\ga}\subset C\), then
\[\md{\Ps(\ga)},\md{\Ph(\ga,s)} \subset C\; \forall\;s\in [0,1].\]
\end{rmk}

The space $\sL N$ is metrizable. Let us fix a \mt \(\mathbf{d}\) on $\sL N$.  Let \(\cG \subset \La\) denote the set of all constant speed immersed closed geodesics in \(N\) with length at most \(L\). 
\begin{pro}\label{p.len.Ps.ga}
	For all \(\ve>0\), \tes \(\de>0\) \st if \(\ga\in \La\) satisfies \(\mathbf{d}\left(\ga,\cG\right)\geq \ve\) then \(\ell(\Ps(\ga))\leq \ell(\ga)-\de.\)
\end{pro}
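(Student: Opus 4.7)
I would prove this proposition by a compactness-plus-rigidity argument. Suppose the conclusion fails for some $\varepsilon > 0$; then there is a sequence $\gamma_n \in \La$ with $\mathbf{d}(\gamma_n, \cG) \geq \varepsilon$ and $\ell(\gamma_n) - \ell(\Ps(\gamma_n)) < 1/n$. The excerpt notes that $\La$ is compact in $\sL N$, so after passing to a subsequence $\gamma_n \to \gamma_\infty \in \La$. The length functional $\ell$ is continuous on $\sL N$ (since $L^2$ convergence of derivatives implies $L^1$ convergence on a bounded domain), and $\Ps$ is continuous by Proposition \ref{p.Ph}, as $\Ps(\gamma) = \Ph(\gamma, 1)$. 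Passing to the limit yields $\ell(\Ps(\gamma_\infty)) = \ell(\gamma_\infty)$, while $\mathbf{d}(\gamma_\infty, \cG) \geq \varepsilon$.

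The heart of the argument is to show that the length-preserving case forces $\gamma_\infty \in \cG$, which contradicts $\mathbf{d}(\gamma_\infty, \cG) > 0$. I would exploit the factorization $\Ps = \tilde{\Ps} \circ \Ps_o \circ \Ps_e$ from Definition \ref{d.Ps}. The reparametrization $\tilde{\Ps}$ preserves length, while $\Ps_e$ and $\Ps_o$ each replace $\gamma$ on subintervals by minimizing constant-speed geodesics, hence are length-non-increasing. Thus $\ell(\Ps(\gamma_\infty)) = \ell(\gamma_\infty)$ forces equality at each intermediate step. The equality $\ell(\Ps_e(\gamma_\infty)) = \ell(\gamma_\infty)$ means $\ell(\gamma_\infty|_{[x_{2i-2}, x_{2i}]}) = d(\gamma_\infty(x_{2i-2}), \gamma_\infty(x_{2i}))$ for each $i$, so $\gamma_\infty$ traces the unique minimizing geodesic between the indicated endpoints on each even interval. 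Equality $\ell(\Ps_o(\Ps_e(\gamma_\infty))) = \ell(\Ps_e(\gamma_\infty))$ similarly forces $\Ps_e(\gamma_\infty)|_{[x_{2i-1}, x_{2i+1}]}$ to be a minimizing geodesic, which in turn implies $\Ps_e(\gamma_\infty)$ has no corner at any $x_{2i}$. Since $\Ps_e(\gamma_\infty)$ is by construction a constant-speed geodesic on each $[x_{2i-2}, x_{2i}]$, it is smooth at each $x_{2i-1}$; the absence of corners at the $x_{2i}$ then makes $\Ps_e(\gamma_\infty)$ a smooth closed geodesic, hence an element of $\cG$.

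To promote this to $\gamma_\infty \in \cG$, I would note that $\gamma_\infty$ and $\Ps_e(\gamma_\infty)$ share a common image (the smooth closed geodesic just produced) and agree at the $L$ points $x_{2i}$. Writing $\gamma_\infty = \alpha \circ \phi$, where $\alpha$ is the unit-speed parametrization of this image and $\phi$ is monotone, the requirement $\gamma_\infty \in \La$ forces $\phi$ to be piecewise affine; matching with the globally affine $\Ps_e(\gamma_\infty)$ at the prescribed values $\phi(x_{2i})$ should then force $\phi$ itself to be globally affine, yielding $\gamma_\infty \in \cG$.

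The main obstacle I anticipate is precisely this final rigidity step: passing from the geometric conclusion that the image is a smooth closed geodesic to the parametric conclusion that $\gamma_\infty$ is constant-speed, since the Sobolev metric $\mathbf{d}$ on $\sL N$ distinguishes reparametrizations. This step requires a careful interaction between the breakpoints $s_j$ witnessing $\gamma_\infty \in \La$ and the fixed markers $x_i = i/(2L)$ appearing in Definition \ref{d.Ps}; the earlier stages of the argument, by contrast, are standard compactness and continuity considerations.
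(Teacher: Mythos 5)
The paper itself offers no proof of this proposition --- it is imported from Colding--Minicozzi --- so there is nothing internal to compare against; I can only assess your argument on its own terms. Your reduction (Steps 1--3) is the standard one and is fine: $\La$ is compact in $\sL N$, $\ell$ and $\Ps=\Ph(\cdot,1)$ are continuous, $\mathbf{d}(\cdot,\cG)$ is $1$-Lipschitz, so a failing sequence yields $\ga_\infty\in\La$ with $\ell(\Ps(\ga_\infty))=\ell(\ga_\infty)$ and $\mathbf{d}(\ga_\infty,\cG)\geq\ve$. The equality analysis through $\Ps_e$ and $\Ps_o$, showing that the image of $\ga_\infty$ is a smooth immersed closed geodesic traversed monotonically, is also correct (modulo the degenerate case below).

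The gap is exactly the step you flag, and your proposed repair does not close it. $\Ps_e(\ga_\infty)$ is \emph{not} ``globally affine'': it is constant speed only on each block $[x_{2i-2},x_{2i}]$, with speed $L\,d\bigl(\ga_\infty(x_{2i-2}),\ga_\infty(x_{2i})\bigr)$, which varies with $i$ whenever $\ga_\infty$ distributes arclength unevenly among the blocks; and two piecewise-affine monotone maps that agree at the finitely many points $x_{2i}$ need not coincide, let alone be affine. Worse, no argument can force $\phi$ to be affine, because the conclusion $\ga_\infty\in\cG$ is false for $\La$ as defined: take $N$ the flat square torus, $\al$ a constant-speed closed geodesic of length $1$, and $\phi$ a non-affine piecewise-affine monotone degree-one map with slopes in $[1/2,2]$. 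Then $\al\circ\phi$ lies in $\La$ (subdivide so each piece has length at most $R$), every even and odd block still maps to a minimizing sub-arc, so $\Ps_e$ and $\Ps_o$ merely re-speed the blocks and $\ell(\Ps(\al\circ\phi))=\ell(\al\circ\phi)$, yet $\al\circ\phi$ is at positive $W^{1,2}$-distance from the compact set $\cG$ of \emph{constant-speed} geodesics. The fix lies in the statement, not the proof: one must either restrict $\La$ to globally constant-speed broken geodesics --- which is all the paper actually needs, since the quantitative decrease is only ever invoked for curves in the image of $\Ps$, and $\tilde{\Ps}$ makes those constant speed, whereupon your rigidity step terminates immediately --- or enlarge $\cG$ (equivalently, use a parametrization-insensitive distance), as Colding--Minicozzi in effect do. A second degeneracy you leave untreated: if $\ga_\infty$ collapses to a point curve, its image is not an immersed geodesic and $\cG$ as written excludes point curves, so no contradiction with $\mathbf{d}(\ga_\infty,\cG)\geq\ve$ arises; point curves must be adjoined to $\cG$ for the statement to hold at all.
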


\section{Relative homotopy groups of the free loop space}
Let \(X\) be a topological space, \(p\in X\). Let \(\bj:\Om_pX\ra \La X\) be the inclusion map, \(\bfe:\La X \ra X\) be defined by \(\bfe(\ga)=\ga(0)\) and \(\bc:X\ra \La X\) be defined by \(\bc(p)=c_p\). (For the notations, see Section \ref{s.notn}.) Then \(\bfe:\La X \ra X\) is a fibration; \(\Om_pX\) is the fiber over \(p\). Since \(\bfe\circ\bc=\text{id}_X\), the long exact sequence of homotopy groups, corresponding to the fibration \(\bfe\), splits, i.e. \fa \(m\geq 1\),
\begin{center}
	\begin{tikzcd}
	0 \arrow[r] & \pi_m(\Om_pX,c_p) \arrow[r, "\bj_*"] & \pi_m(\La X, c_p)\arrow[r, "\bfe_*"] & \pi_m(X,p) \arrow[r] \arrow[l, "\bc_*", dashed, shift left=2] & 0
	\end{tikzcd}
\end{center}
is a split exact sequence (see \cite{Moore}*{Section 2.3}). Hence, \(\pi_m(\La X, c_p)\) is isomorphic to the semi-direct product  \(\pi_m(\Om_p X, c_p)\rtimes \pi_m(X, p)\). In this section, we will prove a relative version of this theorem.

Let \(\cC\) denote the category of pointed sets. Then the objects of \(\cC\) are pairs \((S,s)\) where \(S\) is a set and \(s\in S\). (We call $s$ the distinguished element of $S$.) The morphisms of \(\cC\) are maps \(\vp:(S,s)\ra (S',s')\); we define the kernel of $\vp$ \(\ker(\vp)=\vp^{-1}(s')\). If $G$ is a group and \(1_G\in G\) is the identity element, then \((G,1_G)\) in an object of \(\cC\). Moreover, a group homomorphism maps the identity element to the identity element. Thus, the category of groups can be regarded as a subcategory of \(\cC\). In $\cC$, we have the notion of an exact sequence; 

\begin{center}
	\begin{tikzcd}
	\cdots \arrow[r] & (S_{i-1},s_{i-1}) \arrow[r, "\vp_{i-1}"] & (S_{i},s_{i}) \arrow[r, "\vp_{i}"] & (S_{i+1},s_{i+1}) \arrow[r, "\vp_{i-1}"] & \cdots
	\end{tikzcd}
\end{center}
is called exact if \(\text{im}(\vp_{i-1})=\ker(\vp_i)\) \fa $i$. 

For a topological space \(X\), \(\pi_0(X)\) denotes the set of path-components of \(X\). For \(p\in X\), \(\pi_0(X,p)\) denotes the pointed set $(\pi_0(X),X_p)$, where \(X_p\) is the path-component of \(X\) containing \(p\). Let \(\bj\) and \(\bfe\) be as defined above. Then 
\begin{center}
	\begin{tikzcd}
	\{0\} \arrow[r] & \pi_0(\Om_pX,c_p) \arrow[r, "\bj_*"] & \pi_0(\La X, c_p)\arrow[r, "\bfe_*"] & \pi_0(X,p) \arrow[r]  & \{0\}
	\end{tikzcd}
\end{center}
is an exact sequence. Another exact \seq of pointed sets, which will be useful for our purpose, is the homotopy exact sequence of a pair of pointed topological spaces. Let \(X\) be a topological space, \(A\subset X\) and \(p\in A\). Then the homotopy exact sequence of \((X,A,p)\) is the following long exact sequence \cite{Hat}*{Theorem 4.3}
\begin{multline}
\cdots \ra \pi_i(A,p) \ra \pi_i(X,p) \ra \pi_i(X,A,p) \ra \pi_{i-1}(A,p) \ra \cdots\\ \ra \pi_1(X,A,p) \ra \pi_{0}(A,p) \ra \pi_0(X,p).	
\end{multline}
All but the last three objects in this sequence are groups. The distinguished element of \(\pi_1(X,A,p)\) is the homotopy class of the constant map.

\begin{lem}\label{l.5.lem}
Let us consider the following commutative diagram of groups. 

\begin{center}
\begin{tikzcd}
1 \arrow[r] & C_1' \arrow[d, "f_1'"'] \arrow[r, "g_1"] & C_1 \arrow[r, "h_1"] \arrow[d, "f_1"'] & C_1'' \arrow[d, "f_1''"] \arrow[l, "j_1", dashed, shift left=2] \arrow[r] & 1 \\
1 \arrow[r] & C_2' \arrow[d, "f_2'"'] \arrow[r, "g_2"] & C_2 \arrow[r, "h_2"] \arrow[d, "f_2"'] & C_2'' \arrow[d, "f_2''"] \arrow[l, "j_2", dashed, shift left=2] \arrow[r] & 1 \\
& C_3' \arrow[d, "f_3'"'] \arrow[r, "g_3"] & C_3 \arrow[r, "h_3"] \arrow[d, "f_3"'] & C_3'' \arrow[d, "f_3''"] \arrow[l, "j_3", dashed, shift left=2]  &  \\
1 \arrow[r] & C_4' \arrow[d, "f_4'"'] \arrow[r, "g_4"] & C_4 \arrow[r, "h_4"] \arrow[d, "f_4"'] & C_4'' \arrow[d, "f_4''"]  \arrow[r] & 1 \\
1 \arrow[r] & C_5' \arrow[r, "g_5"]                    & C_5 \arrow[r, "h_5"]                   & C_5''                   \arrow[r] &  1
\end{tikzcd}   
\end{center}

We assume that
\begin{enumerate}[(i)]
	\item all the squares commute, i.e. for \(1\leq i\leq 4\), \(f_i\circ g_i = g_{i+1}\circ f'_i,\; f''_i\circ h_i= h_{i+1}\circ f_i\) and for $i=1,2$, \(f_i\circ j_i=j_{i+1}\circ f''_i;\)
	\item each column is exact;
	\item for \(i\neq 3\), the \(i\)-th row is a short exact sequence; 
	\item for $i=1,2$, short exact sequence in the $i$-th row splits, i.e. \(h_i\circ j_i = \textup{id}_{C_i''}\);
	\item for the third row, we have \(\textup{im}(g_3)\subset \ker(h_3)\) and \(h_3\circ j_3 = \textup{id}_{C_3''}\).
\end{enumerate}

Then the third row is a split exact sequence.

\end{lem}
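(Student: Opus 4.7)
The relation $h_3 \circ j_3 = \textup{id}_{C_3''}$ in hypothesis (v) immediately gives surjectivity of $h_3$, and (v) also provides $\textup{im}(g_3) \subset \ker(h_3)$; so the claim that the third row is a split short exact sequence reduces to verifying that $g_3$ is injective and that $\ker(h_3) \subset \textup{im}(g_3)$, after which the splitting is supplied by (v) itself. Both will be obtained by 5-lemma type diagram chases. The extra ingredient beyond the standard 5-lemma is the pair of splittings in rows 1 and 2: every $z \in C_i$ ($i = 1, 2$) admits a unique set-level decomposition $z = g_i(a) \cdot j_i(b)$ with $a \in C_i'$, $b \in C_i''$, and the commutation $f_i \circ j_i = j_{i+1} \circ f_i''$ asserted in (i) exactly at $i = 1, 2$ is what lets such decompositions be transported one row downward.

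\emph{Injectivity of $g_3$.} Given $x \in \ker(g_3)$, one has $g_4(f_3'(x)) = f_3(g_3(x)) = 1$, and injectivity of $g_4$ (row 4 is short exact) gives $f_3'(x) = 1$, so by column 1 exactness $x = f_2'(y)$ for some $y \in C_2'$. Then $g_2(y) \in \ker(f_2) = \textup{im}(f_1)$; pick $z \in C_1$ with $f_1(z) = g_2(y)$ and decompose $z = g_1(a) \cdot j_1(b)$. The commutativity in (i) expands this as $g_2(y) = g_2(f_1'(a)) \cdot j_2(f_1''(b))$; applying $h_2$ and using $h_2 \circ g_2 = 1$, $h_2 \circ j_2 = \textup{id}$ forces $f_1''(b) = 1$. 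Hence $g_2(y) = g_2(f_1'(a))$, injectivity of $g_2$ gives $y = f_1'(a)$, and column 1 exactness forces $x = f_2'(f_1'(a)) = 1$.

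\emph{The containment $\ker(h_3) \subset \textup{im}(g_3)$.} For $x \in \ker(h_3)$, $h_4(f_3(x)) = f_3''(h_3(x)) = 1$, so $f_3(x) = g_4(u)$ for some $u \in C_4'$ by row 4 exactness. Column 2 exactness then gives $g_5(f_4'(u)) = f_4(f_3(x)) = 1$, injectivity of $g_5$ yields $f_4'(u) = 1$, and column 1 exactness produces $v \in C_3'$ with $f_3'(v) = u$. Thus $f_3(x \cdot g_3(v)^{-1}) = 1$ and $x \cdot g_3(v)^{-1} = f_2(w)$ for some $w \in C_2$. Decomposing $w = g_2(a) \cdot j_2(b)$ and expanding via (i) yields $f_2(w) = g_3(f_2'(a)) \cdot j_3(f_2''(b))$; applying $h_3$, combined with $h_3 \circ g_3 = 1$ (from (v)), $h_3 \circ j_3 = \textup{id}$, and $h_3(x \cdot g_3(v)^{-1}) = 1$, forces $f_2''(b) = 1$ and hence $j_3(f_2''(b)) = 1$. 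Therefore $x \cdot g_3(v)^{-1} = g_3(f_2'(a))$ and $x = g_3(f_2'(a) \cdot v) \in \textup{im}(g_3)$.

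The chief thing to monitor is the non-abelian nature of the groups: multiplicative order matters at every step, the decomposition $C_i = g_i(C_i') \cdot j_i(C_i'')$ is only a set-level (semidirect product) decomposition, and it is essential that the $j$-commutativity of (i) be invoked only at $i = 1, 2$, which is exactly the range where the hypothesis supplies it. No deeper obstacle arises; once the decompositions and commutations are set up, both chases are routine.
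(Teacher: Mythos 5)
Your proposal is correct and follows essentially the same route as the paper's proof: both diagram chases (injectivity of $g_3$ via rows 1--2 and the first column, and $\ker(h_3)\subset\operatorname{im}(g_3)$ via rows 4--5 and then row 2) use the identical reductions, and the set-level decomposition $z=g_i(a)j_i(b)$ you invoke is exactly the paper's Lemma \ref{l.split}. No issues to report.
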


Before we prove this lemma, let us recall the following fact about the split exact sequences.

\begin{lem}\label{l.split}
Let
\begin{center}
	\begin{tikzcd}
	1 \arrow[r] & C' \arrow[r, "g"] & C \arrow[r, "h"] & C'' \arrow[r] \arrow[l, "j", dashed, shift left=2] & 1
	\end{tikzcd}
\end{center}
be a split exact sequence of groups. Then \fa \(c\in C\), \te \(c'\in C'\) and \(c''\in C''\) \st \(c=g(c')j(c'')\).
\end{lem}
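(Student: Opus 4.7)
The plan is to exploit the splitting directly: given $c \in C$, I would define $c'' := h(c) \in C''$ and then show that the element $c \cdot j(c'')^{-1}$ lies in the kernel of $h$, so by exactness it lies in the image of $g$ and hence can be written as $g(c')$ for some $c' \in C'$.

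Concretely, first I would compute
\[
h\bigl(c \cdot j(c'')^{-1}\bigr) = h(c) \cdot h(j(c''))^{-1} = c'' \cdot (h\circ j)(c'')^{-1} = c'' \cdot (c'')^{-1} = 1_{C''},
\]
using the splitting identity $h \circ j = \text{id}_{C''}$. Thus $c \cdot j(c'')^{-1} \in \ker(h)$. By exactness of the sequence at $C$, we have $\ker(h) = \text{im}(g)$, so there exists $c' \in C'$ with $g(c') = c \cdot j(c'')^{-1}$. Rearranging gives $c = g(c') \cdot j(c'')$, as required. Note that $c'$ is in fact uniquely determined because $g$ is injective (the sequence starts with $1 \to C'$), though the lemma statement only asks for existence.

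I do not expect any real obstacle here: this is a purely formal consequence of exactness plus the splitting identity, and uses nothing beyond the definitions. The only thing to be mindful of is that $C$, $C'$, $C''$ need not be abelian, so I would be careful to keep the multiplicative order $g(c') \cdot j(c'')$ (rather than $j(c'') \cdot g(c')$) consistent with what the statement requires, and to write inverses on the correct side when moving $j(c'')$ across the equation.
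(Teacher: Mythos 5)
Your proof is correct and is exactly the standard argument: the paper itself states this lemma as a recalled fact without supplying a proof, and your computation (setting $c''=h(c)$, showing $c\cdot j(c'')^{-1}\in\ker(h)=\operatorname{im}(g)$, and rearranging) is the intended justification. The care you take with the multiplicative order $g(c')\cdot j(c'')$ in the non-abelian setting is appropriate, since that order is what is used later in the proof of Lemma \ref{l.5.lem}.
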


\begin{proof}[Proof of Lemma \ref{l.5.lem}]
Since \(h_3\circ j_3 = \textup{id}_{C_3''}\), \(h_3\) is surjective. Hence we only need to show that \(g_3\) is injective and \(\ker(h_3)\subset \text{im}(g_3)\). 

Suppose $c_3'\in \ker(g_3)$, i.e.
\begin{equation}\label{e.c'_3}
g_3(c'_3)=1.
\end{equation}
Then
\begin{align}
& g_4(f_3'(c'_3))= f_3(g_3(c_3')) = 1\\
& \implies f'_3(c'_3)=1\\
&\implies \exists\; c'_2 \in C'_2 \text{ \st } c'_3 = f'_2(c'_2) \label{e.5.3}\\
& \implies f_2(g_2(c'_2))=g_3(f'_2(c'_2)) = 1\;(\text{by }\eqref{e.c'_3})\\
& \implies \exists\; c_1 \in C_1 \text{ \st } g_2(c'_2)=f_1(c_1). \label{e.5.1}
\end{align}
Lemma \ref{l.split} implies that \te \(c'_1\in C'_1\) and \(c''_1\in C_1''\) \st \(c_1=g_1(c'_1)j_1(c_1'')\). Hence \eqref{e.5.1} implies
\begin{align}
& g_2(c_2')= f_1(g_1(c'_1))f_1(j_1(c_1'')) = g_2(f_1'(c_1'))j_2(f_1''(c_1''))\\
& \implies g_2(c_2'f_1'(c_1')^{-1}) = j_2(f_1''(c_1'')) \label{e.5.2}\\
& \implies h_2(g_2(c_2'f_1'(c_1')^{-1})) = h_2(j_2(f_1''(c_1''))) \\
& \implies 1= f_1''(c_1'') \\
& \implies g_2(c_2'f_1'(c_1')^{-1}) = 1\; (\text{by } \eqref{e.5.2}) \\
& \implies c_2' = f_1'(c_1')\\
& \implies c_3' = 1\; (\text{by } \eqref{e.5.3}).
\end{align}
\tf \(g_3\) is injective.

\sps \(d_3\in\ker(h_3)\), i.e.
\begin{equation}\label{e.f.0}
h_3(d_3)=1
\end{equation}
Then
\begin{align}
& h_4(f_3(d_3)) = f_3''(h_3(d_3))=1\\
& \implies \exists\; d_4'\in C_4' \text{ \st }f_3(d_3)=g_4(d_4') \label{e.f.1}\\
& \implies g_5(f_4'(d_4'))=f_4(g_4(d_4'))=1\\
& \implies f_4'(d_4') = 1\\
& \implies \exists\; d_3' \in C_3'\text{ \st } d_4'=f_3'(d_3')\\
& \implies f_3(d_3)= g_4(f_3'(d_3'))= f_3(g_3(d_3'))\;(\text{by } \eqref{e.f.1})\\
& \implies f_3(d_3g_3(d_3')^{-1})=1\\
& \implies \exists\; d_2\in C_2\text{ \st }d_3g_3(d_3')^{-1}=f_2(d_2) \label{e.f.2}.
\end{align}
Lemma \ref{l.split} implies that \te \(d_2'\in C_2'\) and \(d_2''\in C_2''\) \st \(d_2=g_2(d_2')j_2(d_2'')\). \hn \eqref{e.f.2} implies
\begin{align}
& d_3g_3(d_3')^{-1}=f_2(d_2) = f_2(g_2(d_2'))f_2(j_2(d_2''))\\
& \implies d_3g_3(d_3')^{-1}=g_3(f_2'(d_2'))j_3(f_2''(d_2'')) \label{e.f.3}.
\end{align}
Applying \(h_3\) on the both sides of \eqref{e.f.3} and using \eqref{e.f.0}, we obtain
\[1=f_2''(d_2'');\]
hence \eqref{e.f.3} implies 
\[d_3=g_3(f_2'(d_2')d_3').\]
\tf \(\ker(h_3)\subset \text{im}(g_3)\).
\end{proof}

\begin{rmk}\label{r.5.lem}
In Lemma \ref{l.5.lem}, instead of assuming that all the objects in the diagram are groups, let us assume that the objects in the 1st and 2nd row are groups; the objects in the 3rd, 4th and 5th row are pointed sets and the exactness conditions hold in the sense of pointed sets. Then, from the proof of Lemma \ref{l.5.lem}, it follows that \(\ker(g_3)\) is a singleton set.
\end{rmk}

\begin{pro}\label{p.rel.htp}
	Let \(X\) be a topological space, \(A\subset X\) and $p\in A$. Then, for all \(m\geq 2\), the relative homotopy group \(\pi_m(\La X, \La A, c_p)\) is isomorphic to the semi-direct product \(\pi_m(\Om_p X,\\ \Om_p A, c_p)\rtimes \pi_m(X,A, p)\). Moreover, for $m=1$, the map \(\pi_1(\Om_p X, \Om_p A, c_p) \ra \pi_1(\La X, \La A, c_p)\), which is induced by the inclusion \((\Om_p X, \Om_p A)\hookrightarrow (\La X, \La A)\), is non-constant. 
\end{pro}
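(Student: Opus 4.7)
The plan is to deduce the proposition from Lemma \ref{l.5.lem} (for $m\geq 2$) and Remark \ref{r.5.lem} (for $m=1$) by fitting the three natural long exact sequences of pairs into the $5\times 3$ ladder required by that lemma. First, I would observe that since $\bc(q)=c_q\in \La A$ for every $q\in A$, both $\bfe$ and $\bc$ descend to morphisms of pairs $\bfe:(\La X,\La A)\ra (X,A)$ and $\bc:(X,A)\ra (\La X,\La A)$ with $\bfe\circ\bc=\textup{id}_{(X,A)}$, and $\bfe$ remains a fibration with fibre pair $(\Om_p X,\Om_p A)$.

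Next, I would form the commutative ladder whose three columns are the long exact sequences of pairs of $(\Om_p X,\Om_p A)$, $(\La X,\La A)$ and $(X,A)$, and read off the five consecutive rows centred on the relative row at level $m$. With this indexing, rows 1 and 2 are the absolute split short exact sequences at level $m$ for $A$ and for $X$, recalled at the start of this section; row 3 is the target row
\[\pi_m(\Om_p X,\Om_p A,c_p)\ra \pi_m(\La X,\La A,c_p)\ra \pi_m(X,A,p);\]
and rows 4 and 5 are the absolute short exact sequences at level $m-1$. The splittings $j_1,j_2,j_3$ are all induced by $\bc_*$, and the compatibility $f_i\circ j_i=j_{i+1}\circ f_i''$ for $i=1,2$ follows from the naturality of $\bc$ under the inclusion $A\hookrightarrow X$ and under the inclusion into the pair.

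It then remains to verify the remaining hypotheses of Lemma \ref{l.5.lem}: commutativity of squares is the naturality of the long exact sequence of a pair applied to $\bfe$ and to the inclusions $\bj$; exactness of columns is the long exact sequence of pairs; the composition $\pi_m(\Om_p X,\Om_p A,c_p)\ra \pi_m(X,A,p)$ in the middle of row 3 is induced by $\bfe\circ\bj:\Om_p X\ra X$, which is constant at $p$, so $\textup{im}(g_3)\subset \ker(h_3)$; and $h_3\circ j_3=\textup{id}$ follows from $\bfe\circ\bc=\textup{id}_{(X,A)}$. Lemma \ref{l.5.lem} then yields that row 3 is a split short exact sequence of (possibly non-abelian) groups for $m\geq 2$, giving the semi-direct product decomposition. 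For $m=1$, row 3 and rows 4, 5 consist of pointed sets rather than groups, so I would invoke Remark \ref{r.5.lem} instead to conclude that the kernel of the inclusion-induced map $\pi_1(\Om_p X,\Om_p A,c_p)\ra \pi_1(\La X,\La A,c_p)$ is the singleton containing only the distinguished element, which proves the non-constancy claim.

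The main obstacle is organisational rather than mathematical: one must identify the generic arrows $g_i,h_i,j_i,f_i,f_i',f_i''$ of Lemma \ref{l.5.lem} with concrete maps in the three long exact sequences, and check, in particular, that rows 4 and 5 are short exact (they are, being the absolute splittings from the fibrations over $A$ and $X$). Once the ladder is in place, every hypothesis of the lemma is either the absolute fibration splitting recalled at the start of this section or a routine naturality statement, and the conclusion is immediate.
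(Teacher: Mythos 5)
Your proposal is correct and follows essentially the same route as the paper: the same $5\times 3$ ladder with columns given by the long exact sequences of the pairs $(\Om_pX,\Om_pA)$, $(\La X,\La A)$, $(X,A)$ and rows given by the split loop-space fibration sequences at levels $m$ and $m-1$, concluding via Lemma \ref{l.5.lem} for $m\geq 2$ and Remark \ref{r.5.lem} for $m=1$. All the hypotheses you verify (naturality, $\textup{im}(g_3)\subset\ker(h_3)$ from $\bfe\circ\bj$ being constant, $h_3\circ j_3=\textup{id}$ from $\bfe\circ\bc=\textup{id}$) are exactly the ones the paper checks.
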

\begin{proof} For \(m\geq 1\), let us consider the following commutative diagram.
\begin{center}
	\begin{tikzcd}
	0 \arrow[r] & \pi_m(\Om_p A) \arrow[d] \arrow[r] & \pi_m(\La A) \arrow[r] \arrow[d] & \pi_m(A) \arrow[d] \arrow[l,  dashed, shift left=2] \arrow[r] & 0 \\
	0 \arrow[r] & \pi_m(\Om_p X) \arrow[d] \arrow[r] & \pi_m(\La X) \arrow[r] \arrow[d] & \pi_m(X) \arrow[d] \arrow[l,  dashed, shift left=2] \arrow[r] & 0 \\
	& \pi_m(\Om_p X, \Om_p A) \arrow[d] \arrow[r] & \pi_m(\La X, \La A) \arrow[r] \arrow[d] & \pi_m(X,A) \arrow[d] \arrow[l, dashed, shift left=2]  &  \\
    0 \arrow[r] & \pi_{m-1}(\Om_p A) \arrow[d] \arrow[r] & \pi_{m-1}(\La A) \arrow[r] \arrow[d] & \pi_{m-1}(A) \arrow[d]  \arrow[r] & 0 \\
	0 \arrow[r] & \pi_{m-1}(\Om_p X) \arrow[r] & \pi_{m-1}(\La X) \arrow[r] & \pi_{m-1}(X)  \arrow[r] & 0 \\
	\end{tikzcd}   
\end{center}

The columns correspond to the homotopy exact sequence for the pairs \((\Om_pX, \Om_p A)\), \((\La X, \La A)\) and \((X,A)\) with the basepoints $c_p$, $c_p$ and $p$. The rows (except the 3rd row) correspond to the short exact sequence for the loop space fibration (as discussed at the beginning of this section). The maps in the 3rd row too
\begin{center}
	\begin{tikzcd}
	 \pi_m(\Om_p X,\Om_p A) \arrow[r, "\bj_*"] & \pi_m(\La X, \La A)\arrow[r, "\bfe_*"] & \pi_m(X,A) \arrow[l, "\bc_*", dashed, shift left=2]
	\end{tikzcd}
\end{center}
are induced by the maps \(\bj\), \(\bfe\) and \(\bc\); from the definitions of these maps, it follows that \(\text{im}(\bj_*)\subset \ker(\bfe_*)\) and \(\bfe_*\circ\bc_*=\text{id}_{\pi_m(X,A)}\). 

If \(m\geq 2\), all the objects in this diagram are groups. \tf by Lemma \ref{l.5.lem}, the 3rd row is a split exact sequence; hence \(\pi_m(\La X, \La A, c_p)\cong \pi_m(\Om_p X, \Om_p A, c_p)\rtimes \pi_m(X,A, p)\). If \(m=1\), the objects in the 1st and 2nd row are groups; however the objects in the 3rd, 4th and 5th row are only pointed sets. \tf by Remark \ref{r.5.lem}, the kernel of the map \(\bj_*\) in the 3rd row is a singleton set. Since \(\bj_*\) maps the homotopy class of the constant map in \(\pi_1(\Om_p X, \Om_p A, c_p)\) to the homotopy class of the constant map in \(\pi_1(\La X, \La A, c_p)\), \(\bj_*:\pi_1(\Om_p X, \Om_p A, c_p) \ra \pi_1(\La X, \La A, c_p)\) must be a non-constant map.

\end{proof}
\begin{rmk}
In the above commutative diagram, which appears in the proof of Proposition \ref{p.rel.htp}, the 4th and 5th row are split exact \fa $m\geq 1$ and all the objects are Abelian groups when $m\geq 3$. In Lemma \ref{l.5.lem}, if we additionally assume that the 4th and 5th row are split exact and all the objects are Abelian groups, then the lemma follows from the five lemma for the Abelian groups.
\end{rmk}
\begin{rmk}\label{r.rel.htp}
For all \(m\geq 1\), \tes a canonical bijection \(D:\pi_m(\Om_pX,\Om_pA,c_p)\ra\pi_{m+1}(X,A,p)\), which is an isomorphism if \(m\geq 2\). Let us briefly recall the definition of \(D\). 

Let $I$ denote the interval $[0,1]$. Let $\sI^0=\{1\}$ and for \(j\geq 2\),
\[\sI^{j-1}=\ov{\del I^j\setminus (I^{j-1}\times \{0\})}.\]
Let
\[\tilde{\sI}^1=\ov{\del I^2\setminus (\{0\}\times I)} \text { and for } j\geq 3,\; \tilde{\sI}^{j-1}=\ov{\del I^j \setminus (I^{j-2}\times\{0\}\times I)}.\]
Given \(\Xi:(I^m,\del I^m,\sI^{m-1})\ra (\Om_pX,\Om_pA,\{c_p\})\), we define \(\xi:(I^{m+1},\del I^{m+1},\tilde{\sI}^{m})\ra(X,A,\{p\})\) by \(\xi(s_1,\dots,s_m,s_{m+1})=\Xi(s_1,\dots,s_m)(s_{m+1})\). Then \(D:\pi_m(\Om_pX,\Om_pA,c_p)\ra\pi_{m+1}(X,A,p)\), defined by \(D([\Xi])=[\xi]\), is a well-defined map, which is a group homomorphism if \(m\geq 2\). The inverse of the map $D$, which we denote by \(\De\), is defined as follows. Given \(\xi':(I^{m+1},\del I^{m+1},\tilde{\sI}^{m})\ra(X,A,\{p\})\), we define \(\Xi':(I^m,\del I^m,\sI^{m-1})\ra (\Om_pX,\Om_pA,\{c_p\})\) by \(\Xi'(s_1,\dots s_m)=\xi'(s_1,\dots,s_m,-)\). Then \(\De: \pi_{m+1}(X,A,p) \ra \pi_m(\Om_pX,\Om_pA,c_p)\) is defined by $\De([\xi'])=[\Xi']$.
\end{rmk}

\section{Proof of Theorems \ref{thm} and \ref{thm.main}}\label{s.proof}
To prove Theorem \ref{thm}, we need to prove the existence of a closed geodesic on $M$ when one of the following conditions are satisfied:
\begin{itemize}
	\item[(C1)] \(\pi_1(M,M\setminus U)=0\);
	\item[(C2)] \(\pi_0(\La(M\setminus U))\ra \pi_0(\La M)\) is not injective;
	\item[(C3)] \(\pi_m(\La (M\setminus U), c^*)\ra \pi_m(\La M, c^*)\) is not an isomorphism for some \(m\geq 1\) and \(c^* \in \La (M\setminus U)\) which is non-contractible in $M\setminus U$;
	\item[(C4)] \(\pi_0(\La(M\setminus U))\ra \pi_0(\La M)\) is not surjective.
\end{itemize}
\begin{lem}\label{l.pi.k}
Let \(X\) be a connected topological manifold. Suppose \(A\) is a connected open subset of \(X\) \sot \(\overline{A}\) is a compact manifold \w boundary \(\del A\). Then the relative homotopy group \(\pi_i(X,X\setminus A)\neq 0\) for some \(i\geq 1.\)
\end{lem}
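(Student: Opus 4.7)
The plan is a proof by contradiction using $\mathbb{Z}_2$-homology. Suppose $\pi_i(X,X\setminus A)=0$ for every $i\geq 1$. Combined with the hypothesis that both $X$ and $X\setminus A$ are connected, the long exact homotopy sequence of the pair upgrades this to the statement that the inclusion $X\setminus A\hookrightarrow X$ is a weak homotopy equivalence. Weak equivalences induce isomorphisms on singular homology with any coefficients, so the long exact sequence in homology forces $H_i(X,X\setminus A;\mathbb{Z}_2)=0$ for every $i$, in particular in degree $n=\dim X$.

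To derive a contradiction, I would show $H_n(X,X\setminus A;\mathbb{Z}_2)\neq 0$ via the standard excision identification
\[
H_n(X,X\setminus A;\mathbb{Z}_2)\cong H_n(\overline{A},\del A;\mathbb{Z}_2).
\]
Concretely, the collar neighborhood theorem applied to the compact manifold-with-boundary $\overline{A}$, together with the fact that $\overline{A}$ sits in $X$ as a codimension-$0$ subspace, yields an open bicollar $T\cong\del A\times(-1,1)$ of $\del A$ in $X$ satisfying $T\cap\overline{A}=\del A\times[0,1)$. Excising the closed-in-$X\setminus A$ set $(X\setminus A)\setminus T$ from $(X,X\setminus A)$ and then deformation-retracting $\overline{A}\cup T$ onto $\overline{A}$ along the bicollar delivers the claimed isomorphism.

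The contradiction then follows from Poincar\'{e}--Lefschetz duality with $\mathbb{Z}_2$-coefficients applied to the compact connected topological $n$-manifold with boundary $\overline{A}$ (connected because $A$ is connected and dense in $\overline{A}$), which gives $H_n(\overline{A},\del A;\mathbb{Z}_2)\cong H^0(\overline{A};\mathbb{Z}_2)=\mathbb{Z}_2\neq 0$; working over $\mathbb{Z}_2$ sidesteps orientability and guarantees a fundamental class for any compact manifold with boundary. The step I expect to be the main obstacle is justifying the bicollar: one needs $\del A$ to be locally flat in $X$ so that a bicollar actually exists in the ambient manifold, which should follow from invariance of domain applied to the codimension-$0$ inclusion $\overline{A}\hookrightarrow X$ but deserves careful verification.
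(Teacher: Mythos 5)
Your argument is essentially the paper's: both proofs assume all relative homotopy groups vanish, deduce the vanishing of the relative homology $H_*(X,X\setminus A)$, identify this group via excision and a deformation retraction of a neighborhood with $H_*(\overline{A},\del A)$, and derive a contradiction from Lefschetz duality in degree $n$ with $\mathbb{Z}_2$-coefficients. The only cosmetic differences are that the paper passes through the relative Hurewicz theorem over $\mathbb{Z}$ and then invokes the universal coefficient theorem, whereas you work with $\mathbb{Z}_2$ throughout via the weak-equivalence observation, and the paper handles the neighborhood deformation-retract step by citing the collar-neighborhood proposition in Hatcher rather than constructing the bicollar by hand as you propose.
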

\begin{proof}
\sps \(\pi_i(X,X\setminus A)= 0\) \fa \(i\geq 1.\) Then by the Hurewicz theorem \cite{Hat}*{Theorem 4.37}, the relative homology groups
\begin{equation}\label{e.H.1}
H_i(X, X \setminus A)=0\quad \forall\; i\geq 1.
\end{equation}
By the excision theorem \cite{Hat}*{Theorem 2.20}, $H_i(X, X \setminus A)\cong H_i(\cN_r(A),\cN_r(A)\setminus A)$ (for all $r>0$). By \cite{Hat}*{Proposition 3.42}, if $r$ is sufficiently small, \((\cN_r(A),\cN_r(A)\setminus A)\) deformation retracts onto \((\ov{A},\del A)\); hence \(H_i(\cN_r(A),\cN_r(A)\setminus A) \cong H_i(\ov{A},\del A)\). \tf \eqref{e.H.1} implies
\begin{equation}
H_i(\ov{A},\del A)=0\quad \forall\; i\geq 1.
\end{equation}
However, by the Lefschetz duality \cite{Hat}*{Theorem 3.43}, \(H_n(\ov{A},\del A,\bbz_2)\neq 0\) (where $n=\dim(A)$); hence by the universal coefficient theorem for homology \cite{Hat}*{Theorem 3A.3}, either \(H_{n-1}(\ov{A},\del A)\neq 0\) or \(H_n(\ov{A},\del A)\neq 0\).
\end{proof}

Let $\sM$ be a complete \Rm \mf and \(\ga\in \sL\sM\). We define the Radon measure \(\cL_{\ga}\) on \([0,1]\) as follows. If \(A\) is a Borel subset of \([0,1]\),
\[\cL_{\ga}(A)=\int_A\md{\ga'(s)}\;ds.\]
The Radon measure \(\mu_{\ga}\) on \(\sM\) is defined by \(\mu_{\ga}=\ga_{*}\cL_{\ga}\), i.e. if \(B\) is a Borel subset of \(\sM\), then \(\mu_{\ga}(B)=\cL_{\ga}(\ga^{-1}(B))\).

\begin{lem}\label{l.mu.ga}
The map \(\ga\mapsto \mu_{\ga}\) is a \cts map from \(\sL\sM\) to the space of Radon measures on \(\sM\).
\end{lem}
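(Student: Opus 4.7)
The plan is to prove this by unwinding what convergence means on both sides and reducing to an elementary estimate. On the space of Radon measures on $\sM$, the natural topology is the weak-$*$ topology: $\mu_k\to\mu$ iff $\int f\,d\mu_k\to \int f\,d\mu$ for every $f\in C_c(\sM)$. On the Sobolev loop space $\sL\sM$, convergence $\ga_k\to\ga$ means, in any Riemannian chart covering $\ga([0,1])$, $W^{1,2}$-convergence of the coordinate representatives. By the one-dimensional Sobolev embedding $W^{1,2}([0,1])\hookrightarrow C^{1/2}([0,1])$, this implies that $\ga_k\to \ga$ uniformly on $S^1$ and $\ga_k'\to \ga'$ in $L^2([0,1])$; in particular, the images $\md{\ga_k}$ are eventually contained in a fixed relatively compact tubular neighborhood of $\md{\ga}$, on which $\md{\ga_k'(s)}$ is a well-defined scalar function converging to $\md{\ga'(s)}$ in $L^2([0,1])$ by the reverse triangle inequality $\big|\md{\ga_k'}-\md{\ga'}\big|\le \md{\ga_k'-\ga'}$.

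Fix $f\in C_c(\sM)$. By definition of the push-forward,
\begin{equation}
\int_{\sM} f\,d\mu_{\ga_k}-\int_{\sM} f\,d\mu_\ga=\int_0^1 \bigl(f(\ga_k(s))-f(\ga(s))\bigr)\md{\ga_k'(s)}\,ds + \int_0^1 f(\ga(s))\bigl(\md{\ga_k'(s)}-\md{\ga'(s)}\bigr)\,ds.
\end{equation}
For the first term, uniform continuity of $f$ on a compact neighborhood of $\md{\ga}$ combined with the uniform convergence $\ga_k\to\ga$ yields $\|f\circ\ga_k-f\circ\ga\|_{C^0}\to 0$, while $\int_0^1\md{\ga_k'}\,ds\le\bigl(\int_0^1\md{\ga_k'}^2\,ds\bigr)^{1/2}$ stays bounded along the $W^{1,2}$-convergent sequence; so this term tends to zero. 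For the second term, $\|f\circ\ga\|_{C^0}\le \|f\|_{C^0}$ and $\md{\ga_k'}\to \md{\ga'}$ in $L^1([0,1])$ (using $L^2\hookrightarrow L^1$ on a finite interval), so this term also tends to zero. Combining gives the desired continuity.

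The only slightly delicate point, and the one I would write out carefully, is the transition from the intrinsic $W^{1,2}$-topology on the manifold of Sobolev loops to a flat chart in which both the pointwise convergence $\md{\ga_k'(s)}\to\md{\ga'(s)}$ and the $L^2$ convergence can be expressed unambiguously; this is routine once one uses a finite cover of $\md{\ga}$ by normal coordinate balls and invokes equivalence of the intrinsic norm with the Euclidean Sobolev norm in each chart. Everything else is a one-line application of the Cauchy--Schwarz and reverse-triangle inequalities, so I do not expect a genuine obstacle here.
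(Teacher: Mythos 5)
Your proof is correct and follows essentially the same route as the paper: the same two-term decomposition of $\int f\,d\mu_{\ga_k}-\int f\,d\mu_\ga$, with the first term controlled by uniform convergence of $\ga_k$ and boundedness of $\ell(\ga_k)$, and the second by the reverse triangle inequality together with Cauchy--Schwarz. The only cosmetic difference is that the paper handles the comparison of $\ga_k'$ and $\ga'$ (which live in different tangent spaces) by isometrically embedding a neighborhood of $\md{\ga}$ into some $\bbr^P$ rather than by working in a finite atlas of normal coordinate charts, which disposes of the one ``delicate point'' you flag in a single stroke.
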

\begin{proof}\newcommand{\gi}{\gamma_i}\newcommand{\gin}{\gamma_{\infty}}
Let \(\{\gi\}_{i=1}^{\infty}\) be a \seq in \(\sL\sM\) which converge to \(\gin\) and \(\vp\in C^0_c(\sM)\). Then
\[\int_{\sM} \vp \; d\mu_{\ga}=\int_{0}^{1}\vp(\ga(s))\md{\ga'(s)}\;ds.\]
Thus,
\begin{align}
&\md{\int_{\sM} \vp \; d\mu_{\gin}-\int_{\sM} \vp \; d\mu_{\gi}}\\
&\leq \int_{0}^{1}\Big|\vp(\gin(s))\md{\gin'(s)}-\vp(\gi(s))\md{\gi'(s)}\Big|\;ds\\
&\leq \int_{0}^{1}\md{\vp(\gin(s))-\vp(\gi(s))}\md{\gin'(s)}\;ds + \int_{0}^{1}\md{\vp(\gi(s))}\Big|\md{\gin'(s)}-\md{\gi'(s)}\Big|\;ds.\label{e.gi}
\end{align}
Let us assume that \(\sM\) is isometrically embedded in the Euclidean space $\bbr^P$. For \(x\in \sM\), the tangent space \(T_x\sM\) is a subspace of \(T_x\bbr^P\) and \(T_x\bbr^P\) can be canonically identified with \(\bbr^P\). Using this identification, let us set
\[\de_i=\left(\int_{0}^{1}\md{\gin'(s)-\gi'(s)}^2\;ds\right)^{1/2}.\]
Let
\[\ve_i=\sup_{s\in [0,1]} \md{\vp(\gin(s))-\vp(\gi(s))}.\]
Since \(\ga_i\ra \gin\) in \(\sL\sM\), there holds \(\ve_i,\de_i\ra 0\). Hence, from \eqref{e.gi}, we obtain
\[\md{\int_{\sM} \vp \; d\mu_{\gin}-\int_{\sM} \vp \; d\mu_{\gi}}\leq \ve_i\ell(\gin)+\de_i\nm{\vp}_{C^0}\ra 0.\]
\end{proof}

\begin{proof}[Proof of Theorem \ref{thm} when (C1) holds]
The proof is divided into eight parts.

\noindent\textbf{Part 1.} Since we are assuming that $\pi_1(M,M\setminus U)=0$, by Lemma \ref{l.pi.k}, \tes \(k\geq 1\) \st 
\begin{equation}\label{e.pi.M.U}
\pi_i(M,M\setminus U)=0 \; \forall \; 1\leq i \leq k
\end{equation}
 and \(\pi_{k+1}(M,M\setminus U)\neq 0\). \tf by Proposition \ref{p.rel.htp} and Remark \ref{r.rel.htp}, \(\pi_{k}(\La M,\La (M\setminus U),c_p)\neq 0\). Let us choose \(\tilde{F}:(I^k,\del I^k)\ra (\La M, \La (M\setminus U))\) \sot 
 \begin{equation}\label{e.[F].nonzero}
 [\tilde{F}]\neq 0\in \pi_k(\La M, \La (M\setminus U),c_p).
 \end{equation}
 Let \(\tilde{f}:I^k\times S^1 \ra M\) be defined by \(\tilde{f}(v,s)=\tilde{F}(v)(s)\). 
 
 Let \(\rh>0\) \st \(M\setminus U\) is \(\rh\)-convex (Definition \ref{d.conv}). Let us choose \(K\in \bbn\) \sot \(\md{v-v'}+\md{s-s'}\leq 1/K\) implies 
 \begin{equation}\label{e.K.rh}
 d\left(\tilde{f}(v,s), \tilde{f}(v',s')\right)\leq \frac{\rh}{2}
 \end{equation} 
 and
 \[d\left(\tilde{f}(v,s), \tilde{f}(v',s')\right)<\frac{1}{2}\inf\{\text{conv}_p(M):p\in \text{image}(\tilde{f})\}.\]
 We define \(F:(I^k,\del I^k)\ra (\sL M, \sL (M\setminus U))\) as follows. For $v\in I^k$, $F(v):S^1\ra M$ is defined by the condition that \fa \(i\in \{0,1,2,\dots K\}\), \(F(v)(i/K)=\tilde{F}(v)(i/K)\) and \fa $i\in [K]$, \(F(v)\) restricted to \([\frac{i-1}{K},\frac{i}{K}]\) is a minimizing constant speed geodesic. Then $F(v)\in \sL M$ and $F:I^k\ra \sL M$ is continuous. Moreover, by \eqref{e.K.rh}, $F(\del I^k)\subset \sL (M\setminus U)$. Let us define the homotopy \(G_0:(I^k,\del I^k)\times I\ra (\La M, \La (M\setminus U))\) by 
 \[G_0(v,t)(s)=\bH(\tilde{F}(v)(s),F(v)(s))(t),\]
 where \(\bH\) is as in \eqref{e.bH}. Then \(G_0(-,0)=\tilde{F}\) and \(G_0(-,1)=F\). Moreover, by \eqref{e.K.rh}, if $v\in \del I^k$, then $G_0(v,t)\in \La (M\setminus U)$ \fa $t\in I$.

\noindent\textbf{Part 2.} Let \(f:I^k\times S^1 \ra M\) be defined by \(f(v,s)=F(v)(s)\). Let
\begin{equation}\label{e.la.E}
\la=\sup\{\ell(F(v)):v\in I^k\},\quad E=\sup\{\sE(F(v)):v\in I^k\},
\end{equation}
$V=\cN_{2\la}(U)$,
\begin{equation}\label{e.al}
\al=\inf \{\text{conv}_p(M):p\in V \}
\end{equation}
and $W\subset M$ be a bounded open set which contains \(\cN_{\al}(V)\:\cup\:\text{image}(f)\). Let us choose a closed \Rm \mf \(N\) \st \(N\) and \(M\) have the same dimension and \tes an isometric embedding \(\iota:W \ra N\). We will abuse notation by identifying \(W\) \w \(\iota(W)\).

\noindent\textbf{Part 3.} Let $\rh$ be as in Part 1 and $E$ be as in \eqref{e.la.E}. We choose \(R>0\) and \(L\in \bbn\) \sot 
\begin{equation}\label{e.R.rh}
R<\min\left\{\frac{\al}{4},\frac{\rh}{2} \right\}
\end{equation}
and \((E,R,L)\) satisfies \eqref{e.R}. For this fixed choice of \((E,R,L)\), let us consider the maps \(\Ps\) and \(\Ph\) as defined in Definition \ref{d.Ps} and Proposition \ref{p.Ph}.

\begin{lem}\label{l.F.G}
\Te sequences \(\{F_i:(I^k,\del I^k)\ra (\sL N, \sL (N\setminus U))\}_{i=0}^{\infty}\) and \(\{G_i:(I^k,\del I^k)\times I\ra (\sL N, \sL (N\setminus U))\}_{i=1}^{\infty}\) \w the following properties. $F_0=F$ and \fa $i\geq 1$
\begin{enumerate}[(i)]
	\item \(G_i(-,0)=F_{i-1},\; G_i(-,1)=F_i\);
	\item $\md{G_i(v,t)}\subset W\;\forall\;(v,t)\in I^k\times I$;
	\item for $v\in I^k$, if \(\md{F_{i-1}(v)}\cap U\neq \emptyset\), then \(F_i(v)=\Ps\left(F_{i-1}(v)\right)\);
	\item for $v\in I^k$, if \(\md{F_{i-1}(v)}\cap U= \emptyset\), then \(\md{F_{i}(v)}\cap U= \emptyset\) as well;
	\item for $v\in I^k$, if \(\md{F_i(v)}\cap U\neq \emptyset\), then \(\ell(F_i(v))\leq \la\).
\end{enumerate}
\end{lem}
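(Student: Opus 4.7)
The strategy is induction on $i$, with $F_0 = F$ as the base case. For the inductive step, I would define $F_i$ and $G_i$ by applying the homotopy $\Ph$ of Proposition~\ref{p.Ph} only partially, with the amount of shortening at each $v\in I^k$ determined by a continuous cutoff $\ta_i:I^k\to[0,1]$ that equals $1$ whenever $F_{i-1}(v)$ meets $U$ and vanishes whenever $F_{i-1}(v)$ stays far from $U$. This interpolation is essential: applying $\Ph$ uniformly would risk pushing distant loops outside $W$ after many iterations, while refusing to move them would be discontinuous in $v$.

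Concretely, fix a continuous function $\eta:N\to[0,1]$ with $\eta\equiv 1$ on $\ov{U}$ and $\eta\equiv 0$ outside a small open neighborhood $V'$ of $\ov{U}$ with $\ov{V'}\subset V$. Set $\ta_i(v):=\sup_{s\in S^1}\eta(F_{i-1}(v)(s))$, which is continuous by the Sobolev embedding $\sL N\hookrightarrow C^0(S^1,N)$, and define
\[
G_i(v,t)=\Ph\bigl(F_{i-1}(v),\,t\ta_i(v)\bigr),\qquad F_i(v)=G_i(v,1).
\]
Properties (i), (iii), (iv), (v) then follow with modest verification: (i) is built into the definition; (iii) holds because $\eta\equiv 1$ on $\ov{U}$ forces $\ta_i(v)=1$ whenever $\md{F_{i-1}(v)}\cap U\neq\emptyset$, so $F_i(v)=\Ph(F_{i-1}(v),1)=\Ps(F_{i-1}(v))$; (iv) follows from Remark~\ref{r.spt.Ps.Ph}, since $2R<\rh$ makes $M\setminus U$ a $2R$-convex set, whence $\md{\Ph(F_{i-1}(v),s)}\subset M\setminus U$ for every $s$; and (v) combines (iii), (iv), and the inductive length bound with the length-non-increasing property of $\Ps$.

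The main obstacle will be property (ii), that $\md{G_i(v,t)}\subset W$. Three sub-cases arise. When $\ta_i(v)=0$, $G_i(v,t)=F_{i-1}(v)\subset W$ by induction. When $\md{F_{i-1}(v)}\cap U\neq\emptyset$, the inductive length bound forces $\md{F_{i-1}(v)}\subset\cN_{\la/2}(U)\subset V$, after which Remark~\ref{r.spt.Ps.Ph} together with $4R<\al$ gives $\md{G_i(v,t)}\subset\cN_{4R}(V)\subset\cN_{\al}(V)\subset W$. The delicate sub-case is $\md{F_{i-1}(v)}\subset M\setminus U$ with $\ta_i(v)>0$: the loop avoids $U$ but meets $V'$, so no length bound follows directly from (v). I would take $V'$ thin about $\ov{U}$ (e.g.\ $\ov{V'}\subset\cN_{\mu}(U)$ for small $\mu$) and strengthen the inductive hypothesis to a uniform bound $\ell(F_j(v))\le\la$, propagated by observing that $\Ps_e$ and $\Ps_o$ replace polygonal subarcs of the piecewise-geodesic loops by minimizing geodesics in the $2R$-convex region $M\setminus U$ and hence do not increase length. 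Controlling the length along the partial interpolation $\Ph(\cdot,\ta_i(v))$ for $\ta_i(v)\in(0,1)$ is where I expect the bulk of the technical work.
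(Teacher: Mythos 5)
Your overall architecture matches the paper's: induct on $i$, and define $F_i(v)=\Ph(F_{i-1}(v),\ch(v))$ for a continuous cutoff $\ch$ that equals $1$ on loops meeting $U$, so that (i), (iii), (iv), (v) come out exactly as you describe. The genuine gap is in your treatment of (ii), and it traces back to your choice of cutoff. You take $\ta_i$ to vanish only on loops avoiding a \emph{thin} neighbourhood $V'$ of $\ov{U}$; consequently a loop with $\ta_i(v)\in(0,1)$ is constrained only by $\md{F_{i-1}(v)}\subset W$, and Remark \ref{r.spt.Ps.Ph} then gives $\md{G_i(v,t)}\subset\cN_{4R}(W)$, which is not contained in $W$. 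Your proposed repair --- a uniform bound $\ell(F_j(v))\le\la$ for \emph{all} $v$ --- is exactly the statement that does not follow from the length-non-increasing property of $\Ps_e$ and $\Ps_o$: when $\ta_i(v)\in(0,1)$ the curve $F_i(v)=\Ph(F_{i-1}(v),\ta_i(v))$ is an intermediate stage of the geodesic-interpolation homotopy (e.g.\ $x\mapsto\bH(\ga(x),\ga_e(x))(2s)$), whose length is not bounded by $\ell(F_{i-1}(v))$ in general curvature. You flag this yourself as ``the bulk of the technical work,'' but it is not a technicality to be filled in later; as stated the induction does not close, and one cannot absorb the drift by enlarging $W$ either, since $Q$ (the number of iterations needed in Part 5) is chosen only after $N\supset W$ is fixed.

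The paper avoids this entirely by basing the cutoff on position at the \emph{outer} scale rather than proximity to $U$: it sets $\ch_j=0$ on $D^0_j=\{v:\md{F_j(v)}\cap(N\setminus V)\neq\emptyset\}$ and $\ch_j=1$ on $D^1_j=\ov{\{v:\md{F_j(v)}\cap U\neq\emptyset\}}$, where $V=\cN_{2\la}(U)$. These sets are disjoint precisely because of the inductive length bound in (v): a loop meeting $U$ has length at most $\la$, hence lies in $\ov{\cN_\la(U)}\subset V$. Then \emph{every} loop that is moved at all satisfies $\md{F_j(v)}\subset V$, so Remark \ref{r.spt.Ps.Ph} and $4R<\al$ give $\md{G_{j+1}(v,t)}\subset\cN_{\al}(V)\subset W$ with no length control on partially moved loops needed. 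If you replace your $\eta$-based cutoff by this two-set Urysohn cutoff, the rest of your argument goes through verbatim.
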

\begin{proof}
We set \(F_0=F\). By the choice of \(W\), \(\md{F(v)}\subset W\) \fa \(v\in I^k\). Moreover, by the definition of \(\la\), \(\ell(F(v))\leq \la\) \fa \(v\in I^k\). Let us assume that \(F_j:(I^k,\del I^k)\ra (\sL N, \sL (N\setminus U))\) has already been defined for some \(j\geq 0\) and $F_j$ has the following properties:
\begin{itemize}
	\item \(\md{F_j(v)}\subset W\;\forall v\in I^k\);
	\item for $v\in I^k$, if \(\md{F_j(v)}\cap U\neq \emptyset\), then \(\ell(F_j(v))\leq \la\).
\end{itemize}
We define \(F_{j+1}:(I^k,\del I^k)\ra (\sL N, \sL (N\setminus U))\) and \(G_j:(I^k,\del I^k)\times I\ra (\sL N, \sL (N\setminus U))\) as follows. Let
\[D^0_j=\{v\in I^k: \md{F_j(v)}\cap (N\setminus V)\neq \emptyset\};\quad D^1_j=\ov{\{v\in I^k: \md{F_j(v)}\cap U\neq \emptyset\}}.\]
Then, \(D^0_j,\;D^1_j\) are closed subsets of \(I^k\). Moreover, by our assumption, if \(v\in D^1_j\), then
\[\md{F_j(v)}\subset \ov{\cN_{\la}(U)};\]
\hn \(D^0_j\:\cap\: D^1_j=\emptyset\). Let \(\ch_j:I^k\ra [0,1]\) be a \cts \fn \st 
\begin{equation}
\ch_j=\begin{cases}
0 &\text{ on } D^0_j;\\
1 &\text{ on } D^1_j.
\end{cases}
\end{equation}
For $(v,t)\in I^k\times I$, we define
\[G_{j+1}(v,t)=\Ph(F_j(v),\ch_j(v)t),\quad F_{j+1}(v)=\Ph(F_j(v),\ch_j(v)).\]
Then $G_{j+1}(-,0)=F_j$, $G_{j+1}(-,1)=F_{j+1}$. Remark \ref{r.spt.Ps.Ph} and \eqref{e.R.rh} imply that \(G_{j+1}(v,t)\in \sL (N\setminus U)\) \fa \((v,t)\in \del I^k\times I\) and for $v\in I^k$, if \(\md{F_{j}(v)}\cap U= \emptyset\), then \(\md{F_{j+1}(v)}\cap U= \emptyset\). \tf if \(\md{F_{j+1}(v)}\cap U\neq \emptyset\) then \(\md{F_{j}(v)}\cap U \neq \emptyset\). From the definition of $F_{j+1}$, it follows that if \(\md{F_{j}(v)}\cap U \neq \emptyset\), then \(F_{j+1}(v)=\Ps(F_j(v))\); in that case by Proposition \ref{p.len.Ps.ga}, $\ell(F_{j+1}(v))\leq \ell(F_{j}(v))\leq \la$. 

It remains to show that \(\md{G_{j+1}(v,t)}\subset W\) \fa \((v,t)\in I^k\times I\). If \(v\in D^0_j\), then \(G_{j+1}(v,t)=F_j(v)\) \fa \(t\in I\); hence \(\md{G_{j+1}(v,t)}=\md{F_j(v)}\subset W\). If \(v\notin D^0_j\), then \(\md{F_j(v)}\subset V\). \tf by Remark \ref{r.spt.Ps.Ph} and \eqref{e.R.rh},
\[\md{G_{j+1}(v,t)}\subset \cN_{4R}\left(\md{F_j(v)}\right)\subset \cN_{\al}(V)\subset W. \]
\end{proof}

\noindent\textbf{Part 4.} \Sn $U$ is an open set \w $C^0$ boundary $\del U$, \tes an open set \(V'\subset \cN_{\la}(U)\) and a homeomorphism \(\cF:\del U\times (-1,1)\ra V'\) \st \(\cF(x,0)=x\) \fa \(x\in \del U\);
\begin{equation}
\cF(\del U \times (-1,0)) \subset U;\quad  \cF(\del U \times (0,1)) \subset V'\setminus \ov{U}.
\end{equation}

Let \(d_{\del U}\) denote the signed distance from \(\del U\) \st \(d_{\del U}<0\) on \(U\). For \(r\in \bbr\), let 
\[U(r)=\{p\in N: d_{\del U}(p)<r\}.\] We choose \(0<\et\leq \al \) (where $\al$ is as in \eqref{e.al}) \sot 
\begin{equation}\label{e.et}
\{-3\et \leq d_{\del U}\leq \et\} \subset V'.
\end{equation}

For \(\ga\in\sL N\), let \(\mu_{\ga} \) be the Radon measure as defined before Lemma \ref{l.mu.ga}; if $B$ is a Borel subset of \(N\), we use the notation
\[\ell(\ga,\;B)=\mu_{\ga}(B). \]

\begin{clm}\label{c.geodesic}
	\Tes a closed geodesic \(\si\) in \(N\) \st \(\ell(\si)\leq \la\) and \(\ell\left(\si,\;\ov{U(\et)}\right)\geq \et\) .
\end{clm}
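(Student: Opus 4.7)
The plan is to argue by contradiction: assume no closed geodesic $\si\subset N$ satisfies both $\ell(\si)\le\la$ and $\ell(\si,\overline{U(\et)})\ge\et$. The strategy combines the Birkhoff iterations of Lemma \ref{l.F.G} with the nontriviality of $[\tilde F]$ in $\pi_k(\La M,\La(M\setminus U),c_p)$ from \eqref{e.[F].nonzero}, and the collar structure near $\partial U$ encoded by $\cF$.

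First I would track the parameters still meeting $U$. Set $A_i:=\{v\in I^k:|F_i(v)|\cap U\neq\emptyset\}$. The Sobolev embedding $\sL N\hookrightarrow C^0$ makes $v\mapsto|F_i(v)|$ Hausdorff-continuous, so each $A_i$ is open; Lemma \ref{l.F.G}(iv) gives $A_{i+1}\subseteq A_i$; and $A_i\cap\partial I^k=\emptyset$ since $F_i(\partial I^k)\subset\sL(N\setminus U)$. If some $A_{i_0}$ were empty, then $F_{i_0}(I^k)\subset\sL(N\setminus U)$, and the concatenation $G_0\ast G_1\ast\cdots\ast G_{i_0}$ (whose images all lie in $W\subset M$) would be a relative homotopy from $\tilde F$ to $F_{i_0}$ through $(\La M,\La(M\setminus U))$, forcing $[\tilde F]=0$ against \eqref{e.[F].nonzero}. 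Hence each $A_i\neq\emptyset$, and by nested compactness I pick $v_*\in\bigcap_i\overline{A_i}$.

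Second, approximating $v_*$ by $v_k\in A_i$ with $v_k\to v_*$, property (iii) of Lemma \ref{l.F.G} together with continuity of $\Ps$ yields $F_{i+1}(v_*)=\Ps(F_i(v_*))$ for all $i\ge 1$; the length being continuous on $\sL N$ gives $\ell(F_i(v_*))\le\la$, and $|F_i(v_*)|\cap\overline U\neq\emptyset$ by Hausdorff continuity. Then $\{F_i(v_*)\}_{i\ge 1}$ lives in the compact space $\La$, and Proposition \ref{p.len.Ps.ga} (any limit point not in $\cG$ would force infinitely many $\de$-drops in length) forces every subsequential limit into $\cG$. Hence a subsequence $F_{i_k}(v_*)\to\si\in\cG$, a closed geodesic with $\ell(\si)\le\la$ and $|\si|\cap\overline U\neq\emptyset$.

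Finally, I would upgrade the qualitative $|\si|\cap\overline U\neq\emptyset$ to the quantitative conclusion $\ell(\si,\overline{U(\et)})\ge\et$. Under the contradiction hypothesis, each maximal excursion of $\si$ into $\overline{U(\et)}$ has length $<\et$ and, since $d_{\partial U}$ is $1$-Lipschitz, depth $<\et/2$, so it sits inside the collar $\cF(\partial U\times(-\et,\et))\subset V'$ by \eqref{e.et}. By Lemma \ref{l.mu.ga} the same holds for $F_{i_k}(v)$ when $v$ is near $v_*$ and $i_k$ is large. Using the coordinates $\cF$, I would build a continuous rel-$\partial I^k$ deformation of $F_{i_k}$ that slides every such shallow excursion out across $\partial U$ into $\sL(N\setminus U)$, and concatenating with $G_0\ast\cdots\ast G_{i_k}$ would give a forbidden relative homotopy of $\tilde F$ into $\sL(M\setminus U)$, contradicting \eqref{e.[F].nonzero}. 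The main obstacle is this last deformation: extending the push-out from a neighborhood of $v_*$ to the whole of $I^k$ continuously, which should require the shallow-excursion property to propagate uniformly over all $v\in A_{i_k}$ via compactness of $\La$ and some control on the set of closed geodesics of length at most $\la$.
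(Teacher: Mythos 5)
Your first two steps are sound as far as they go, but they cannot by themselves deliver the claim, and the third step --- which you correctly identify as the crux --- is missing exactly the two ideas on which the paper's proof is built. The limit curve $\si$ you extract at $v_*$ need not satisfy $\ell(\si,\overline{U(\et)})\ge \et$, and it need not even be non-constant: a loop lying inside $U$ simply shrinks to a point under the Birkhoff iteration, and nothing in the choice of $v_*\in\bigcap_i\overline{A_i}$ rules this out. (Relatedly, your appeal to Proposition \ref{p.len.Ps.ga} to force subsequential limits into $\cG$ only works if $\cG$ is read as containing the point curves; with that reading $\si$ may be a point and you obtain nothing.) So the entire content lies in deriving a contradiction from the hypothesis that every closed geodesic $\be$ with $\ell(\be)\le\la$ has $\ell(\be,\overline{U(\et)})<\et$, i.e., in your ``upgrade'' step.

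Two ingredients are missing there. First, uniformity over $I^k$: the paper does not follow a single orbit but uses the contradiction hypothesis globally. Compactness of the set $\sG$ of geodesics of length at most $\la$, closedness of $\cS=\{\ell(c)\le\la,\ \ell(c,\overline{U(\et)})\ge\et\}$ (via Lemma \ref{l.mu.ga}), and Proposition \ref{p.len.Ps.ga} give a uniform length drop $\ze>0$ for every curve in $\La\cap\cS$, so after a fixed number $Q\ge\la/\ze+1$ of iterations \emph{every} $v\in I^k$ satisfies the dichotomy of Lemma \ref{l.F_Q(v)}: either $|F_Q(v)|\cap U=\emptyset$, or $|F_Q(v)|\subset U(\et)$ with $\ell(F_Q(v))<\et$. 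This is precisely the uniform propagation you flag as ``the main obstacle''; it comes from the global form of the contradiction hypothesis, not from any analysis at $v_*$. Second, even granting the dichotomy, your plan to ``slide shallow excursions out across $\del U$'' cannot handle the loops in the second alternative: after they are contracted to points (the paper's Part 6), these are constant loops at points possibly deep inside $U$, outside the collar $V'$, and no deformation supported near $\del U$ can move them into $M\setminus U$. The paper disposes of them with the compression lemma, using $\pi_i(M,M\setminus U)=0$ for $1\le i\le k$ from \eqref{e.pi.M.U} --- the standing hypothesis of case (C1), which your proposal never invokes. Without it the final homotopy of $F''$ into $\La(M\setminus U)$, and hence the contradiction with \eqref{e.[F].nonzero}, cannot be assembled.
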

The proof of this claim will be presented in Parts 5 -- 8. Assuming the claim, we obtain
\[\md{\si}\subset \cN_{\la}(U(\et))\subset V.\]
Viewing \(V\) as a subset of \(M\) we conclude that \(\si\) is a non-trivial closed geodesic in \(M\).

\noindent\textbf{Part 5.} Let \(\sG \subset \sL N\) denote the set of all constant speed immersed closed geodesics in \(N\) with length at most \(\la\). We assume that \fa \(\be\in \sG\), there holds 
\begin{equation}\label{e.mu.beta}
\ell\left(\be,\;\ov{U(\et)}\right)<\et
\end{equation} 
\(\sG\subset \sL N\) is compact; by Lemma \ref{l.mu.ga},
\[\cS=\left\{c\in\sL N:\ell(c)\leq \la,\;\ell\left(c,\;\ov{U(\et)}\right)\geq \et\right\} \]
is a closed subset of $\sL N$. \hn \eqref{e.mu.beta} implies that \tes \(\ta>0\) \st \fa $\ga\in\cS$, \(\mathbf{d}(\ga,\sG)\geq \ta\) (where $\mathbf{d}$ is as in Proposition \ref{p.len.Ps.ga}). \tf by Proposition \ref{p.len.Ps.ga}, \tes \(\ze>0\) \st if \(\ga\in \La\;\cap\:\cS\), then 
\begin{equation}\label{e.ze}
\ell(\Ps(\ga))\leq \ell(\ga)-\ze.
\end{equation}
We choose $Q\in \bbn$ \st \(Q\geq \frac{\la}{\ze} +1.\)
\begin{lem}\label{l.F_Q(v)}
For all \(v\in I^k\), one of the following two alternatives hold.
\begin{enumerate}[(i)]
	\item \(\md{F_Q(v)}\cap U=\emptyset\).
	\item \(\md{F_Q(v)}\subset U(\et)\) and \(\ell(F_Q(v))<\et\).
\end{enumerate}
\end{lem}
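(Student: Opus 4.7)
The plan is to prove the contrapositive form: assume alternative (i) fails, i.e.\ $\md{F_Q(v)}\cap U\neq\emptyset$, and deduce alternative (ii). First I would observe, using part (iv) of Lemma~\ref{l.F.G} contrapositively, that $\md{F_i(v)}\cap U\neq\emptyset$ for every $0\leq i\leq Q$; combined with part (iii), this forces $F_i(v)=\Ps(F_{i-1}(v))$ for all $1\leq i\leq Q$. In particular each such $F_i(v)$ lies in the compact set $\La$, and since $\Ps$ never increases length (it replaces arcs by minimizing geodesics and then reparametrizes), the sequence $\ell(F_i(v))$ is non-increasing and bounded above by $\la$.

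The geometric heart of the argument is the following claim $(\star)$: if $\ga\in\sL N$ satisfies $\md{\ga}\cap U\neq\emptyset$, $\ell(\ga)\leq\la$, and $\ga\notin\cS$, then $\md{\ga}\subset U(\et)$ and $\ell(\ga)<\et$. To establish it, suppose for contradiction that some $q\in\md{\ga}$ had $d_{\del U}(q)\geq\et$. Pick $p\in\md{\ga}\cap U$, so $d_{\del U}(p)<0$. The continuous function $d_{\del U}\circ\ga$ attains values $<0$ and $\geq\et$; taking the last $0$--crossing $t_0$ and the first subsequent $\et$--crossing $t_1$ along the appropriate subarc, one has $d_{\del U}\circ\ga\in[0,\et]$ on $[t_0,t_1]$, so $\ga([t_0,t_1])\subset\ov{U(\et)}$. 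The $1$--Lipschitz property of $d_{\del U}$ then gives $\ell(\ga|_{[t_0,t_1]})\geq d(\ga(t_0),\ga(t_1))\geq\et$, whence $\ell(\ga,\ov{U(\et)})\geq\et$, contradicting $\ga\notin\cS$. Hence $\md{\ga}\subset\{d_{\del U}<\et\}=U(\et)$, which forces $\ell(\ga,\ov{U(\et)})=\ell(\ga)$, and this is $<\et$ because $\ga\notin\cS$.

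Finally I would combine $(\star)$ with the contradiction hypothesis of Part~5. Since $\sG\cap\cS=\emptyset$, Proposition~\ref{p.len.Ps.ga} supplies $\ze>0$ with $\ell(\Ps(\ga))\leq\ell(\ga)-\ze$ for every $\ga\in\La\cap\cS$. Two cases: if $F_i(v)\in\cS$ for \emph{every} $1\leq i\leq Q-1$, iterating yields $\ell(F_Q(v))\leq\la-(Q-1)\ze\leq 0$ by the choice $Q\geq\la/\ze+1$, so $F_Q(v)$ is a constant loop; because $\md{F_Q(v)}\cap U\neq\emptyset$, this constant lies in $U\subset U(\et)$, and (ii) holds trivially. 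Otherwise there exists $i^*\in[1,Q-1]$ with $F_{i^*}(v)\notin\cS$; applying $(\star)$ at $i^*$ gives $\ell(F_{i^*}(v))<\et$, and by monotonicity of length $\ell(F_Q(v))\leq\ell(F_{i^*}(v))<\et$. Therefore $\ell(F_Q(v),\ov{U(\et)})\leq\ell(F_Q(v))<\et$, so $F_Q(v)\notin\cS$, and a second application of $(\star)$ to $F_Q(v)$ itself delivers $\md{F_Q(v)}\subset U(\et)$ and $\ell(F_Q(v))<\et$, i.e.\ alternative (ii).

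The main obstacle I expect is claim $(\star)$: converting the purely measure-theoretic smallness $\ell(\ga,\ov{U(\et)})<\et$ into the pointwise containment $\md{\ga}\subset U(\et)$. This upgrade rests on the connectedness of the closed curve $\ga$ together with the $1$--Lipschitz nature of the signed distance function $d_{\del U}$; everything else in the argument is bookkeeping via the length-decrease proposition and the pigeonhole on the count of iterates lying in $\cS$.
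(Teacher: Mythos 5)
Your proof is correct and follows essentially the same route as the paper: the same dichotomy over whether every iterate \(F_i(v)\) lies in \(\cS\), the same pigeonhole \(\ell(F_Q(v))\leq \la-(Q-1)\ze\leq 0\), and the same Lipschitz/connectedness upgrade from \(\ell(\ga,\ov{U(\et)})<\et\) to \(\md{\ga}\subset U(\et)\). Your claim \((\star)\) merely spells out in detail a step the paper asserts in one line, and your treatment of the degenerate constant-loop case is a harmless reorganization of the paper's contradiction.
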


\begin{proof}
\sps \tes \(v\in I^k\) \st \fa \(0\leq i\leq Q\),
\begin{equation}\label{e.F_i.0.Q}
\md{F_i(v)}\cap U \neq \emptyset\text{ and } \ell\left(F_i(v),\;\ov{U(\et)}\right)\geq \et.
\end{equation}
Then, by Lemma \ref{l.F.G}(iii), \(F_{i+1}(v)=\Ps(F_i(v))\) \fa \(0\leq i \leq Q-1\). \hn by \eqref{e.ze},
\begin{equation}
\ell(F_{i+1}(v))\leq \ell(F_i(v))-\ze\quad\forall\;1\leq i \leq Q-1.
\end{equation}
This implies
\[\ell(F_Q(v))\leq \ell(F_1(v))-(Q-1)\ze\leq \la-(Q-1)\ze\leq 0,\]
which contradicts \eqref{e.F_i.0.Q}. \tf \fa \(v\in I^k\), either 
\begin{equation}\label{e.F_j.U}
\md{F_j(v)}\cap U=\emptyset \text{ for some }0\leq j\leq Q
\end{equation}
or
\begin{equation}\label{e.l.F_j.U'}
\md{F_i(v)}\cap U \neq \emptyset\;\forall\;0\leq i\leq Q\text{ and } \ell\left(F_j(v),\ov{U(\et)}\right)<\et \text{ for some }0\leq j\leq Q.
\end{equation}
If \eqref{e.F_j.U} holds, then by Lemma \ref{l.F.G}(iv), \(\md{F_Q(v)}\cap U=\emptyset.\) If \eqref{e.l.F_j.U'} holds, then \(\md{F_j(v)}\subset U(\et)\), which implies \(\ell(F_j(v))<\et\). \tf by Lemma \ref{l.F.G}(iii), \(\ell(F_Q(v))<\et\) which implies \(\md{F_Q(v)}\subset U(\et)\).
\end{proof}

\noindent\textbf{Part 6.} Let \(\Tht:\left\{ c\in \sL N: \ell(c)<\text{conv}_{c(0)}(N) \right\}\times [0,1]\ra \sL N\) be defined by
\[\Tht(\ga, t)(s)=\bH(\ga(s), \ga(0))(t),\]
where \(\bH\) is as in \eqref{e.bH}. Then \(\Tht(\ga,0)=\ga,\;\Tht(\ga,1)=c_{\ga(0)}\) and 
\begin{equation}\label{e.spt.Tht}
\md{\Tht(\ga,t)}\subset B(\ga(0),\ell(\ga))\quad \forall\;t\in [0,1].
\end{equation}
Let
\[D^0=\left\{v\in I^k:\md{F_Q(v)}\subset N\setminus U(-\et)\right\},\;D^1=\left\{ v\in I^k : \md{F_Q(v)}\cap \ov{U(-2\et)}\neq \emptyset \right\}.\]
Then $D^0$, \(D^1\) are disjoint closed subsets of $I^k$. Let \(\ch:I^k\ra [0,1]\) be a \cts \fn \st
\begin{equation}
\ch(v)=\begin{cases}
0 &\text{ if } v\in D^0;\\
1 & \text{ if } v\in D^1.
\end{cases}
\end{equation}
By Lemma \ref{l.F_Q(v)}, if \(\md{F_Q(v)}\cap \ov{U(-\et)}\neq \emptyset\) then \(\ell(F_Q(v))<\et\); hence \(\md{F_Q(v)}\subset U\). We define \(G':(I^k,\del I^k)\times I\ra (\sL N, \sL (N\setminus U))\) by
\begin{equation}
G'(v,t)=\begin{cases}
\Tht\left(F_Q(v),\ch(v)t\right) &\text{ if } \md{F_Q(v)}\cap \ov{U(-\et)}\neq \emptyset;\\
F_Q(v) &\text{ if }\md{F_Q(v)}\subset N\setminus U(-\et).
\end{cases}
\end{equation}
Then \(G'(v,0)=F_Q(v)\) and we define $F': (I^k,\del I^k) \ra (\sL N, \sL (N\setminus U))$ by \(F'(v)=G'(v,1)\). Moreover, if \(G'(v,t)\neq F_Q(v)\), then \eqref{e.spt.Tht} implies \(\md{G'(v,t)}\subset U(\et)\); therefore, by Lemma \ref{l.F.G}(ii), 
\begin{equation}\label{e.spt.G'}
\md{G'(v,t)}\subset W\;\forall\;(v,t)\in I^k\times I.
\end{equation} 
We also note that, if \(F'(v)\) is non-constant, then \(\md{F_Q(v)}\subset N\setminus \ov{U(-2\et)}\), which implies
\begin{equation}\label{e.F'.non-cnst}
\md{F'(v)}\subset N\setminus U(-3\et).
\end{equation}

\noindent\textbf{Part 7.} Given a \cts map \(\ph:N \ra N \) and \(\ga \in \La N\), let \(\ph_{\#}\ga\in \La N\) be defined by \(\left(\ph_{\#}\ga\right)(s)=\ph(\ga(s)). \) \eqref{e.et} implies that \tes \(\tht \in (0,1)\) \st 
\begin{equation}\label{e.tht}
\{-3\et \leq d_{\del U}\leq \et\} \subset \cF(\del U\times [-1+\tht,1-\tht]).
\end{equation}
Let \(\ps:\bbr\ra [0,1]\) be a compactly supported \(C^1\) \fn \sot \(\text{spt}(\ps)\subset (-1,1)\) and \(\ps\equiv 1\) on \([-1+\tht,1-\tht]\). Let \(\{\hat{\ph}^t\}_{t\geq 0}\) denote the flow of the vector field $\ps(\xi)\del_{\xi}$ on $\bbr$. Then \tes \(T>0\) \st 
\begin{equation}\label{e.T}
\hat{\ph}^T([-1+\tht,\infty)) \subset [1-\tht,\infty).
\end{equation}
For \(t\geq 0\), we define \(\ph^t:N\ra N\) as follows. If \(p\in V'\) and \(p=\cF(x,\xi)\), we set \(\ph^t(p)=\cF(x,\hat{\ph}^t(\xi))\); if \(p\notin V'\), we set \(\ph^t(p)=p\). Then \(\{\ph^t\}_{t\geq 0}\) is a \cts family of homeomorphisms of \(N\). \eqref{e.tht} and \eqref{e.T} imply
\begin{equation}\label{e.F^T}
\ph^T\left( N\setminus U(-3\et) \right) \subset N\setminus U(\et).
\end{equation}
We define $F'': (I^k,\del I^k) \ra (\La N, \La (N\setminus U))$ and \(G'':(I^k,\del I^k)\times I\ra (\La N, \La (N\setminus U))\) by \(F''(v)=\ph ^{T}_{\#}F'(v)\) and \(G''(v,t)=\ph ^{Tt}_{\#}F'(v)\). Then \(G''(-,0)=F'\) and \(G''(-,1)=F''\). Moreover, since \(\ph^t(p)=p\) \fa \(t\geq 0\) and \(p\in N\setminus V'\), by \eqref{e.spt.G'}, 
\begin{equation}\label{e.spt.G''}
\md{G''(v,t)}\subset W\;\forall\;(v,t)\in I^k\times I.
\end{equation}  
Furthermore, by \eqref{e.F'.non-cnst} and \eqref{e.F^T}, if \(F''(v)\) is non-constant, then 
\begin{equation}\label{e.F''.non-cnst}
\md{F''(v)}\subset N\setminus U(\et).
\end{equation}

Concatenating the homotopies $\{G_i\}_{i=0}^Q$, $G'$ and $G''$, we conclude that \tes a homotopy \(G: (I^k,\del I^k)\times I\ra (\La N, \La (N\setminus U))\) \st \(G(-,0)=\tilde{F}\) and \(G(-,1)=F''\); Lemma \ref{l.F.G}(ii), \eqref{e.spt.G'} and \eqref{e.spt.G''} imply that 
\begin{equation}\label{e.spt.G}
\md{G(v,t)}\subset W\;\forall\;(v,t)\in I^k\times I.
\end{equation}  
Viewing \(W\) as a subset of \(M\), we can regard \(G\) as a map from $(I^k,\del I^k)\times I$ to $(\La M, \La (M\setminus U))$ and \(F''\) as a map from $(I^k,\del I^k)$ to $(\La M, \La (M\setminus U))$. \tf using \eqref{e.[F].nonzero}, we obtain
\begin{equation}\label{e.[F''].nonzero}
[F'']\neq 0 \in \pi_k(\La M,\La (M\setminus U),c_p).
\end{equation}

\noindent\textbf{Part 8.} For \(j\in \bbn\), let \(I(j)\) be the cell complex on \(I\) whose \(0\)-cells are
\[[0],[j^{-1}],\dots,[1-j^{-1}],[1]\]
and \(1\)-cells are
\[[0,j^{-1}],[j^{-1},2j^{-1}],\dots, [1-j^{-1},1].\]
Let \(I^k(j)\) denote the cell complex on \(I^k\) whose cells are \(c_1\otimes c_2\otimes\dots\otimes c_k\), where each \(c_i\in I(j)\). By abuse of notation, a cell \(c_1\otimes c_2\otimes\dots\otimes c_k\) will be identified with its support \(c_1\times c_2\times\dots\times c_k \subset I^k\).

We recall that \(\mathbf{e}:\La M \ra M\) is defined by \(\mathbf{e}(\ga)=\ga(0)\). Let \(f'':(I^k,\del I^k) \ra (M, M\setminus U)\) be defined by \(f''(v)=\mathbf{e}(F''(v)).\) Let
\[\left(f''\right)^{-1}(U)=\cU,\quad \left(f''\right)^{-1}(U(\et))=\tilde{\cU}. \]
We choose \(l\in \bbn\) \sot if \(c\in I^k(l)\) and \(c\cap \cU\neq \emptyset\) then \(c \subset \tilde{\cU}\). Let \(\bX \subset I^k(l)\) be the union of all top dimensional cells \(c\in I^k(l)\) (i.e. \(\dim(c)=k\)) \st \(c\cap \cU\neq \emptyset\). Let \(\bA\) be the union of all cells \(e\in \bX\) \st \(e\cap \cU = \emptyset\). Then
\begin{equation}\label{e.A}
\ov{(I^k(l)\setminus \bX)}\cap \bX \subset \bA.
\end{equation}
Since $M\setminus U$ is connected, using \eqref{e.pi.M.U} and applying \cite{Hat}*{Lemma 4.6} to the map \(f'':(\bX,\bA)\ra (M,M\setminus U)\), we conclude that \te \(\hat{h}:(\bX,\bA)\times I \ra (M,M\setminus U)\) and \(\hat{f}: \bX \ra M\setminus U\) \st \(\hat{h}(-,0)=f''\), \(\hat{h}(-,1)=\hat{f}\) and \(\hat{h}(v,t)=f''(v)\) \fa \((v,t)\in \bA\times I\). We define \(\hat{H}:(\bX,\bA)\times I \ra (\La M,\La (M\setminus U))\) and \(\hat{F}: \bX \ra \La (M\setminus U)\) by 
\[\hat{H}(v,t)=c_{\hat{h}(v,t)}\; \text{ and }\; \hat{F}(v)=c_{\hat{f}(v)}.\] 
\eqref{e.F''.non-cnst} implies \(F''(v)=c_{f''(v)}\) \fa \(v\in \tilde{\cU}\). \hn \(\hat{H}(v,0)=F''(v)\) and \(\hat{H}(v,1)=\hat{F}(v)\) \fa $v\in \bX$ and \(\hat{H}(v,t)=F''(v)\) \fa \((v,t)\in \bA \times I\). We extend the definitions of \(\hat{H}\) and \(\hat{F}\) on all of \(I^k\times I\) and \(I^k\) by setting \(\hat{H}(v,t)=F''(v)\) and \(\hat{F}(v)=F''(v)\) \fa \(v\in \ov{I^k(l)\setminus \bX}\) and \(t\in I\). Then \eqref{e.A} implies \(\hat{H}:(I^k,\del I^k)\times I \ra (\La M, \La (M\setminus U))\) and \(\hat{F}:I^k \ra \La (M\setminus U)\) are well-defined, \cts maps with \(\hat{H}(v,0)=F''(v)\) and \(\hat{H}(v,1)=\hat{F}(v)\) \fa \(v\in I^k\). Hence \([F'']=0\in \pi_k(\La M,\La (M\setminus U))\), which contradicts \eqref{e.[F''].nonzero}. This finishes the proof of Claim \ref{c.geodesic}.
\end{proof}

\begin{proof}[Proof of Theorem \ref{thm} when either (C2) or (C3) holds]
If (C2) holds, \te \(\Pi_0,\;\Pi_1\in \pi_0(\La(M\setminus U))\) and \(\tilde{\Pi}\in \pi_0(\La M)\) \st \(\Pi_0,\;\Pi_1\subset \tilde{\Pi}\). Since \(M\setminus U\) is connected, without loss of generality we can assume that \(\Pi_0\) does not contain the constant loops in \(\msu\). Let \(P:I\ra \tilde{\Pi} \) be a \cts map \st \(P(0)\in \Pi_0\) and \(P(1)\in \Pi_1\). Then \([P]\neq 0\in \pi_1(\La M, \La (\msu),c^*)\), where \(c^*\in \Pi_0\). On the other hand, if (C3) holds, then the homotopy exact sequence of \((\La M, \La (\msu),c^*)\) implies either \(\pi_m(\La M, \La (\msu),c^*)\neq 0\) or \(\pi_{m+1}(\La M, \La (\msu),c^*)\neq 0\). So let  us assume that \(\pi_k(\La M, \La (\msu),c^*)\neq 0\) for some \(k\geq 1\) and \(c^*\in \La (\msu)\) which is non-contractible in \(\msu\).
	
We now proceed as in the proof of Theorem \ref{thm} when (C1) holds. We choose \(\tilde{F}:(I^k,\del I^k)\ra (\La M, \La (M\setminus U))\) \sot 
\begin{equation}\label{e.[Ft].nonzero}
[\tilde{F}]\neq 0\in \pi_k(\La M, \La (M\setminus U),c^*);
\end{equation}
repeating the argument of Parts 1 -- 7 we construct $F'':(I^k,\del I^k)\ra (\La M, \La (M\setminus U))$ \st for $v\in I^k$, if \(F''(v)\notin \La (\msu)\), then (cf. \eqref{e.F''.non-cnst})
\begin{equation}\label{e.F''2.non-cnst}
F''(v) \text{ is constant.}
\end{equation}
Moreover, \tes \(G: (I^k,\del I^k)\times I\ra (\La M, \La (M\setminus U))\) \w \(G(-,0)=\tilde{F}\) and \(G(-,1)=F''\). \tf if \(\Pi\) denotes the path-component of \(\La(\msu)\) which contains \(c^*\), then \((F'')^{-1}(\Pi)\neq I^k\) (by \eqref{e.[Ft].nonzero}); moreover, \((F'')^{-1}(\Pi)\) is a closed subset of \(I^k\). \tf if $u$ belongs to the (topological) boundary of \((F'')^{-1}(\Pi)\), then \eqref{e.F''2.non-cnst} implies \(F''(u)\) is constant. This contradicts the fact that \(c^*\) is non-contractible in \(\msu\).

\end{proof}

\begin{proof}[Proof of Theorem \ref{thm} when (C4) holds] The proof is divided into five parts.
	
	\noindent\textbf{Part 1.} Condition (C4) implies that \tes \(\tilde{\ga}\in \La M\) \st 
	\begin{equation}\label{e.[ga].nonzero}
	\tilde{\ga} \text{ is not freely homotopic to any loop } \hat{\ga} \in \La (M\setminus U) \text{ on } M.
	\end{equation}
	Let us choose \(K\in \bbn\) \sot \(\md{s-s'}\leq 1/K\) implies
	\[d(\tilde{\ga}(s),\tilde{\ga}(s'))<\frac{1}{2}\inf\{\text{conv}_p(M):p\in \md{\tilde{\ga}}\}.\]
	Let \(\ga:S^1\ra M\) be defined by the condition that \fa \(i\in \{0,1,2,\dots K\}\), \(\ga(i/K)=\tilde{\ga}(i/K)\) and \fa $i\in [K]$, \(\ga\) restricted to \([\frac{i-1}{K},\frac{i}{K}]\) is a minimizing constant speed geodesic. Then $\ga\in \sL M$. Let us define the homotopy \(J_0:S^1\times [0,1]\ra M\) by \(J_0(s,t)=\bH(\tilde{\ga}(s),\ga(s))(t)\), where \(\bH\) is as in \eqref{e.bH}. Then \(J_0(-,0)=\tilde{\ga}\) and \(J_0(-,1)=\ga\).
	
	\noindent\textbf{Part 2.} Let \(\ell(\ga)=\la\), \(V=\cN_{\la}(U)\),
	\[\al=\inf \{\text{conv}_p(M):p\in V \}, \]
	\(W=\cN_{2\al}(V)\). Let us choose a closed \Rm \mf \(N\) \st \(N\) and \(M\) have the same dimension and \tes an isometric embedding \(\iota:W \ra N\). We will abuse notation by identifying \(W\) \w \(\iota(W)\).  Let \(\sE(\ga)=E\). We choose \(R>0\) and \(L\in \bbn\) \sot \(4R<\al\) and \((E,R,L)\) satisfies \eqref{e.R}. For this fixed choice of \((E,R,L)\), let us consider the maps \(\Ps\) and \(\Ph\) as defined in Definition \ref{d.Ps} and Proposition \ref{p.Ph}.
	
	\noindent\textbf{Part 3.} Let $\ga_0=\ga$ and for \(i\in\bbn\), let \(\ga_i=\Ps^i(\ga)\). 
	By Proposition \ref{p.Ph} and Remark \ref{r.spt.Ps.Ph}, \fa \(i\in\bbn\), \tes \(J_i:S^1\times[0,1]\ra N\) \st 
	\begin{equation}\label{e.H_i}
	J_i(-,0)=\ga_{i-1},\; J_i(-,1)=\ga_i\text{ and } \md{J_i(-,t)}\subset \cN_{4R}(\md{\ga_i})\;\forall\;t\in[0,1].
	\end{equation}
	For \(j\in\bbn\), concatenating all the \(J_i\)'s for $1\leq i\leq j$, we obtain \(\tilde{J}_j:S^1\times [0,1]\ra N\) \st \(\tilde{J}_j(-,0)=\ga\) and \(\tilde{J}_j(-,1)=\ga_j\). 
	
	\noindent\textbf{Part 4.} We claim that \(\md{\ga_i}\cap U\neq \emptyset\) \fa \(i\). To justify this claim, let us assume that \tes \(m\in \bbn\) \st \(\md{\ga_i}\cap U\neq \emptyset\) for \(0\leq i\leq m-1\) and \(\md{\ga_m}\cap U=\emptyset\). By Proposition \ref{p.len.Ps.ga}, 
	\begin{equation}\label{e.ell.ga_i}
	\ell(\ga_i)\leq \la\; \forall\;i\in\bbn.
	\end{equation} 
	\hn \(\md{\ga_i}\subset \cN_{\la}(U)\) \fa \(0\leq i\leq m-1.\) \tf by \eqref{e.H_i}, \(\md{J_i(-,t)}\subset \cN_{\al}(V)\) \fa \(i\in[m]\) and $t\in [0,1]$. Thus, \(\tilde{J}_m:S^1\times [0,1]\ra W\) \w \(\tilde{J}_m(-,0)=\ga\) and \(\tilde{J}_m(-,1)=\ga_m\). \tf viewing \(W\) as a subset of \(M\), $\tilde{\ga}$ is freely homotopic to $\ga_m\in \La (M\setminus U)$ on $M$, which contradicts \eqref{e.[ga].nonzero}.
	
	\noindent\textbf{Part 5.} By Proposition \ref{p.len.Ps.ga}, \tes a subsequence \(\{\ga_{i_k}\}_{k=1}^{\infty}\) and a closed geodesic \(\si\) in \(N\) \st \(\ga_{i_k}\) converges to \(\si\) in \(\sL N\). By \eqref{e.ell.ga_i}, \(\ell(\si)\leq \la\). We claim that \(\ell(\si)>0\) and \(\md{\si}\cap U\neq \emptyset\). Assuming the claim, we obtain \(\md{\si}\subset \cN_{\la}(U)\). Viewing \(W\) as a subset of \(M\), we conclude that \(\si\) is a non-trivial closed geodesic in \(M\).
	
	To justify the claim, we first note that convergence in \(\sL N\) implies convergence in \(\La N\). \tf \tes \(l\in \bbn\) \st \(d(\si(s),\ga_{i_l}(s)) < \al\) \fa \(s\in S^1\). Moreover, by Part 4, \(\md{\si}\cap \overline{U}\neq \emptyset\); so \(\md{\si}\subset \cN_{\la}(U)\). We define the homotopy \(\tilde{J}:S^1\times [0,1]\ra W\) by \[\tilde{J}(s,t)=\bH(\si(s),\ga_{i_l}(s))(t),\] where \(\bH\) is as in \eqref{e.bH}. Further, by \eqref{e.H_i} and Part 4, \(\text{image}(\tilde{J}_j)\subset W\), \fa \(j\in \bbn\). Thus, concatenating \(J_0,\;\tilde{J}_{i_l}\) and \(\tilde{J}\), we obtain \(J:S^1\times [0,1]\ra W\) \st \(J(-,0)=\tilde{\ga}\) and \(J(-,1)=\si\). \tf viewing \(W\) as a subset of \(M\), $\tilde{\ga}$ is freely homotopic to $\si$ on $M$. \hn if \(\md{\si}\) is a point or \(\md{\si}\cap U=\emptyset\), then that will be a contradiction of \eqref{e.[ga].nonzero}. 
	
\end{proof}

\begin{lem}\label{l.conj}
Let \(G\) be an arbitrary group and \(H\) be a proper finite index or normal subgroup of \(G\). Then
\begin{equation}\label{e.conj}
G\neq \bigcup_{g\in G}gHg^{-1}.
\end{equation}
 
\end{lem}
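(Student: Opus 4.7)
The plan is to handle the two cases separately. If $H$ is a proper normal subgroup of $G$, then $gHg^{-1}=H$ for every $g\in G$, so $\bigcup_{g\in G}gHg^{-1}=H\subsetneq G$ and we are done immediately. So the substantive case is when $H$ is a proper subgroup of finite index $n=[G:H]\geq 2$.

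For the finite index case, I would reduce to the finite setting via the \emph{normal core}. Let $K=\bigcap_{g\in G}gHg^{-1}$; this is a normal subgroup of $G$ contained in $H$. The left multiplication action of $G$ on the coset space $G/H$ gives a homomorphism $G\to S_n$ whose kernel is exactly $K$, and hence $[G:K]$ is finite (it divides $n!$). Writing $\overline{G}=G/K$ and $\overline{H}=H/K$, we see that $\overline{H}$ is a proper subgroup of the finite group $\overline{G}$ of index $n$, and $\bigcup_{g\in G}gHg^{-1}$ is the preimage under $G\to\overline{G}$ of $\bigcup_{\overline{g}\in\overline{G}}\overline{g}\,\overline{H}\,\overline{g}^{-1}$. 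It therefore suffices to show that the latter is a proper subset of $\overline{G}$.

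The finite version is then a classical counting argument (Jordan). Let $m=|\overline{H}|$, so $|\overline{G}|=mn$. The number of distinct conjugates of $\overline{H}$ in $\overline{G}$ is $[\overline{G}:N_{\overline{G}}(\overline{H})]$, which is at most $n$ (since $\overline{H}\subset N_{\overline{G}}(\overline{H})$). Each conjugate has $m$ elements and all conjugates share the identity, so
\[
\left|\bigcup_{\overline{g}\in\overline{G}}\overline{g}\,\overline{H}\,\overline{g}^{-1}\right| \leq 1 + n(m-1) = mn - n + 1 < mn = |\overline{G}|,
\]
using $n\geq 2$. This yields a coset of $K$ disjoint from every conjugate of $H$, giving the desired element of $G\setminus\bigcup_{g\in G}gHg^{-1}$.

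There is no real obstacle here; the only nontrivial ingredients are the observation that the normal core has finite index (Poincaré's theorem, via the permutation action on cosets) and the elementary counting bound. Once those are in place the proof is essentially one line of arithmetic.
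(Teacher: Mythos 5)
Your proof is correct, but it takes a genuinely different route from the paper's. The paper also splits off the normal case in one line, but for the finite-index case it works directly with the permutation representation $\pi\colon G\to \mathfrak{S}$ on the left cosets of $H$: the image $G'=\pi(G)$ is a finite group acting transitively on the $n\geq 2$ cosets, so Burnside's (Cauchy--Frobenius) orbit-counting lemma forces some $\tau_0\in G'$ to act without fixed points, and any preimage $x_0$ of $\tau_0$ lies outside every conjugate of $H$ because $x\in gHg^{-1}$ iff $x$ fixes the coset $gH$. You instead pass to the quotient by the normal core $K=\bigcap_g gHg^{-1}$ (noting $[G:K]<\infty$ by Poincar\'e) and then run Jordan's cardinality count $1+n(m-1)<mn$ in the finite group $G/K$; your verification that $\bigcup_g gHg^{-1}$ is exactly the preimage of $\bigcup_{\bar g}\bar g\overline{H}\bar g^{-1}$ (using $K\subset gHg^{-1}$ for all $g$) is the one point that needs care, and you handle it correctly. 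The trade-off is that your argument avoids the orbit-counting lemma at the cost of an extra reduction step, whereas the paper's avoids the normal core entirely; both are complete and essentially classical.
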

 \begin{proof}
If $H$ is a proper normal subgroup of \(G\) then \(gHg^{-1}=H\) \fa \(g\in G\); hence \eqref{e.conj} holds.
 
If $H$ is a proper finite index subgroup of $G$, we consider the set $S$ of the left cosets of $H$ in $G$. Let $\mathfrak{S}$ denote the permutation group of $S$ and \(\pi:G\ra \mathfrak{S}\) be the homomorphism \sot \(\pi(x)\in \mathfrak{S}\) maps \(gH\) to \(xgH\). Let $G'=\pi(G)$. By Burnside's lemma,
\begin{equation}\label{e.BL}
1=\frac{1}{\md{G'}}\sum_{\ta\in G'}\md{\text{Fix}(\ta)},
\end{equation}
where for $\ta\in \mathfrak{S}$, \(\text{Fix}(\ta)\) is the set of the fixed points of $\ta$ in $S$. Since the identity element of \(G'\) has more than one fixed points, \eqref{e.BL} implies \tes \(\ta_0\in G'\) \st \(\text{Fix}(\ta_0)=\emptyset\). \(x\in gHg^{-1}\) (where $x,\; g\in G$) if and only if \(xgH=gH\). \tf if \(\ta_0=\pi(x_0)\), \(x_0\notin  \bigcup\limits_{g\in G}gHg^{-1}.\)
\end{proof}

\begin{rmk}\label{r.conj}
Let \(\mathbb{F}\) be an algebraically closed field. Let \(G\) be the group of all \(2\times 2\) invertible matrices whose entries are in \(\mathbb{F}\) and \(H\) be the group of all \(2\times 2\) invertible upper triangular matrices whose entries are in \(\mathbb{F}\). Then, there holds \(G= \bigcup\limits_{g\in G}gHg^{-1}.\)
\end{rmk}

\begin{lem}\label{l.*}
Let \(X\) be a connected topological manifold. Suppose \(A\) is a connected open subset of \(X\) \sot \(\overline{A}\) is a compact manifold \w connected boundary \(\del A\). Let us consider the following three conditions.
\begin{itemize}
	\item[(i)] The map \(\pi_0(\La (X\setminus A))\ra \pi_0(\La X)\) (induced by the inclusion $X\setminus A \hookrightarrow X$) is surjective.
	\item[(ii)] The union of all the conjugate subgroups of the image of the homomorphism \(\pi_1(X\setminus A)\ra \pi_1(X)\) (induced by the inclusion $X\setminus A \hookrightarrow X$) is \(\pi_1(X)\).
	\item[(iii)] The union of all the conjugate subgroups of the image of the homomorphism \(\pi_1(\del A)\ra \pi_1(\overline{A})\) (induced by the inclusion $\del A \hookrightarrow \overline{A}$) is \(\pi_1(\overline{A})\). 
\end{itemize}

Then (i) and (ii) are equivalent and they imply (iii).
\end{lem}
\begin{proof}
Given a group \(G\), let \(\text{conj}(G)\) denote the set of conjugacy classes in \(G\). A group homomorphism \(\ph:H\ra G\) induces a map \(\ph_{\#}:\text{conj}(H)\ra \text{conj}(G)\) which maps the conjugacy class of \(h\) to the conjugacy class of \(\ph(h)\). $\ph_{\#}$ is surjective if and only if 
\begin{equation*}
G=\bigcup_{g\in G}g\ph(H)g^{-1}.
\end{equation*}
Thus, the homomorphism \(\pi_1(X\setminus A)\ra \pi_1(X)\) (induced by the inclusion $X\setminus A \hookrightarrow X$) induces a map \(\text{conj}(\pi_1(X\setminus A))\ra \text{conj}(\pi_1(X))\). For a topological space $Y$, \(\pi_0(\La Y)\) can be canonically identified with \(\text{conj}(\pi_1(Y))\). Under this identification, the map \(\pi_0(\La (X\setminus A))\ra \pi_0(\La X)\) (induced by the inclusion $X\setminus A \hookrightarrow X$) corresponds to the map \(\text{conj}(\pi_1(X\setminus A))\ra \text{conj}(\pi_1(X))\), described above. \tf (i) and (ii) are equivalent.

By the Seifert-van Kampen theorem, $\pi_1(X)$ is isomorphic to the amalgamated free product $\pi_1(\ov{A})\ast_{\pi_1(\del A)}\pi_1(X\setminus A)$. From the properties of the amalgamated free product of groups \cite{MO1,MO2}, \cite[Theorem 4.6(ii)]{MKS} it follows that (ii) implies (iii).
\end{proof}

\begin{rmk}\label{r.comp}
Let \(X\) be a topological space and \(A\subset X\) be connected. Let us compare the three conditions $\pi_1(X,A)\neq 0$, \(\pi_0(\La A)\ra \pi_0(\La X)\) is not surjective and \(H_1(X,A)\neq 0\), which appear in Theorems \ref{thm} and \ref{thm.main}, \w $(X,A)=(M,\msu)$. From Lemma \ref{l.conj}, Remark \ref{r.conj}, the existence of finite perfect groups and the following discussion, it follows that, in general, these three conditions are not equivalent. The homotopy exact sequence for the pair $(X,A)$ implies \(\pi_1(X,A)\neq 0\) if and only if $\pi_1(A)\rightarrow \pi_1(X)$ is not surjective. On the other hand, in the proof Lemma \ref{l.*}, we explained that \(\pi_0(\La A)\ra \pi_0(\La X)\) is not surjective if and only if \(\text{conj}(\pi_1(A))\ra \text{conj}(\pi_1(X))\) is not surjective. Lastly, the homology exact sequence for the pair $(X,A)$ implies that \(H_1(X,A)\neq 0\) if and only if $\pi_1(A)^{ab}\rightarrow \pi_1(X)^{ab}$ is not surjective (where $G^{ab}$ denotes the abelianization of the group $G$).
\end{rmk}

\begin{proof}[Proof of Theorem \ref{thm.main}] As mentioned in Remark \ref{r.1.4}, by Theorem \ref{thm}, \tes a closed geodesic on $M$ if either \(\pi_1(M,\msu)=0\) or the map \(\pi_0(\La (\msu))\ra \pi_0(\La M)\) is not surjective. Moreover, by the homotopy exact sequence for the pair $(M,\msu)$, \(\pi_1(M,\msu)=0\) if and only if $\pi_1(M\setminus U)\rightarrow \pi_1(M)$ is surjective. By the Seifert-van Kampen theorem, if $\del U$ is connected and $\pi_1(\del U)\ra \pi_1(\ov{U})$ is surjective, then $\pi_1(M\setminus U)\rightarrow \pi_1(M)$ is also surjective. 
	
\Sn $\msu$ is connected, every relative homology class in \(H_1(M,\msu)\) can be represented by a closed loop \(\ga:[0,1]\ra M\) \st \(\ga(0)=\ga(1)\in \msu\). If \([\ga]\neq 0\in H_1(M,\msu)\), then \(\ga\) can not be freely homotopic to a loop in \(\msu\). \tf $H_1(M,\msu)\neq 0$ implies \(\pi_0(\La (\msu))\ra \pi_0(\La M)\) is not surjective.

Let $\Si_1,\Si_2$ be two connected components of $\del U$. For $i=1,2$, let $p_i\in \Si_i$, $\ga_1:[0,1]\ra \ov{U}$ and $\ga_2:[0,1]\ra \msu$ be $C^1$ embedded curves, $\ga_i^{-1}(\del U)=\{0,1\}$, $\ga_i(0)=p_1$, $\ga_i(1)=p_2$, $\ga_i$ intersects $\del U$ orthogonally. Let $\ga$ be the closed curve obtained by concatenating $\ga_1$ and $\ga_2$. Then the mod $2$ intersection number of $\ga$ with $\Si_1$ is $1$. So, \(\ga\) can not be freely homotopic to a loop in \(\msu\). \tf if $\del U$ is not connected, \(\pi_0(\La (\msu))\ra \pi_0(\La M)\) is not surjective.  

Suppose $\del U$ is connected and $\pi_1(\del U)=0$. If $\pi_1(\ov{U})=0$, then $\pi_1(\del U)\ra \pi_1(\ov{U})$ is surjective which implies \(\pi_1(M,\msu)=0\). If $\pi_1(\ov{U})\neq 0$, Lemma \ref{l.*} implies that \(\pi_0(\La (\msu))\ra \pi_0(\La M)\) is not surjective.

\sps $\pi_1(M)$ is a finite group or an Abelian group. If $\pi_1(M\setminus U)\ra \pi_1(M)$ is surjective, then \(\pi_1(M,\msu)=0\). If $\pi_1(M\setminus U)\ra \pi_1(M)$ is not surjective, Lemmas \ref{l.conj} and \ref{l.*} imply that \(\pi_0(\La (\msu))\ra \pi_0(\La M)\) is not surjective.

\sps $\pi_1(U)$ is a finite group or an Abelian group. We can also assume that $\del U$ is connected. If $\pi_1(\del U)\ra \pi_1(\ov{U})$ is surjective, then \(\pi_1(M,\msu)=0\). If $\pi_1(\del U)\ra \pi_1(\ov{U})$ is not surjective, Lemmas \ref{l.conj} and \ref{l.*} imply that \(\pi_0(\La (\msu))\ra \pi_0(\La M)\) is not surjective.
\end{proof}
\medskip

\bibliographystyle{amsalpha}
\bibliography{geodesic_bib}

\end{document}